\newtheorem{theorem}{Theorem}[section]
\theoremstyle{plain}
\newtheorem{definition}{Definition}[section]
\newtheorem{example}{Example}[section]
\newtheorem{lemma}{Lemma}[section]
\numberwithin{equation}{section}
\begin{document}
\title[Topological Nearly Entropy]{Topological Nearly Entropy on Nearly
Compact Spaces}
\author{Zabidin Salleh}
\address{Department of Mathematics, Faculty of Ocean Engineering Technology and Informatics, Universiti Malaysia
Terengganu, 21030 Kuala Nerus, Terengganu, Malaysia.}
\email{zabidin@umt.edu.my}
\author{Syazwani Gulamsarwar}
\curraddr{School of Informatics and Applied Mathematics, Universiti Malaysia
Terengganu, 21030 Kuala Nerus, Terengganu, Malaysia.}
\email{syzwani.g@gmail.com}
\date{August 6, 2019}
\subjclass[2010]{ 54H20, 37B40}
\keywords{Topological entropy, topological nearly entropy, $R$-map, nearly
compact, $R$-dynamical system.}

\begin{abstract}
In our previous paper \cite{gulam}, we have introduced topological nearly
entropy, $Ent_{N}\left( f\right) $ by restricting $X$ into a class of nearly
compact spaces. In the present paper, some additional properties of this
notion are studied. Furthermore, we introduce another new notion of
topological nearly entropy of $f$ denoted by $Ent_{n}\left( f\right) $ when
the whole space $X$ itself is nearly compact. We show the relationship
between these two notions for the class of nearly compact subspaces. We also
propose new space, namely, $R$-space in studying the topological nearly
entropy on nearly compact and Hausdorff space. As a consequence, the
topological nearly entropy of $f$ and it restriction $f|_{K}$ coincides.
Finally, some fundamental properties of topological nearly entropy for
product space are obtained.
\end{abstract}

\maketitle

\section{Introduction}

Compactness is a crucial property of topological space which play an
important role for the development of topological entropy of a dynamical
system. Adler et al. \cite{adler} introduced the concept of topological
entropy of continuous mapping for a compact space of dynamical system in
1965. Subsequently, Bowen \cite{bowen} generalized the concept of
topological entropy due to Adler et al. \cite{adler} which is
metric-dependent and hold for noncompact spaces, see also \cite{zabidin} for
the detail explanation of these two notions. Next, Liu et al. \cite{liu}
generalized the notion of Adler et al. where separation axioms not
necessarily required which intiate the concept of topological entropy for
arbitrary topological spaces. In 2015, Afsan \cite{uzzal} defined
topological $H$-entropy for topological $H$-dynamical system based on $H$%
-set and $H$-closed topological space.

In a recent paper, Gulamsarwar and Salleh \cite{gulam} presented a new
generalized notion of topological entropy on nearly compact space by using $%
R $-map \cite{carnahan}, namely topological nearly entropy. Recall that
every compact space is nearly compact but not the converse. Thus, the
concept of topological nearly entropy for topological $R$-dynamical system
was initiated in accordance with several fundamental properties of Adler et
al. \cite{adler} and Liu et al. \cite{liu}. Until now, many researchers are
interested with the concept of topological entropy as we can see more of
their works in \cite{canovas} and \cite{goodwyn}, etc.

This paper explore more fundamental properties of topological nearly
entropy. Moreover this paper presenting the notion of topological nearly
entropy for $R$-map on nearly compact space and some its fundamental
properties are studied. These fundamental properties are further extension
of Gulamsarwar and Salleh \cite{gulam} topological nearly entropy.
Furthermore, we introduce $R$-space in order to study the effect of nearly
compactness on Hausdorff space to the topological nearly entropy. Finally,\
we show that $R$-map preserves nearly compact property and obtain several
properties of topological nearly entropy for product space.

In Section 3, we explore more properties and show the results obtained for
topological nearly entropy of the funtion $f$ and it inverse $f^{-1}$ if $f$
is bijective. The definition of topological nearly entropy on nearly compact
space and its fundamental properties are discussed in Section 4. We also
study topological nearly entropy on nearly compact and Hausdorff space which
has triggered the idea to introduce $R$-space. Consequently, we obtain
several properties of topological nearly entropy for product space which are
discusses in the Section 5.

\section{Preliminaries}

Throughout this paper $X$ is always mean arbitrary topological space unless
explicitly stated. We denote the closure and interior of $A\subseteq X$ by $%
\limfunc{cl}\left( A\right) $ and $\limfunc{int}\left( A\right) $,
respectively. A subset $A$ of $X$ is said to be regular open if and only if $%
A=\limfunc{int}\left( \limfunc{cl}\left( A\right) \right) $. By regular open
cover of $X$, we mean a cover of $X$ by regular open sets of $X$. The join
of two covers $\mathcal{U}$ and $\mathcal{V}$ are described as $\mathcal{U}%
\vee \mathcal{V}=\left\{ A\cap B:A\in \mathcal{U},B\in \mathcal{V}\right\} $
and a cover $\mathcal{V}$ is said to be refinement of a cover $\mathcal{U}$,
denoted as $\mathcal{U}\prec \mathcal{V}$, if for every element $V\in
\mathcal{V}$ there exists an element $U\in \mathcal{U}$ such that $%
V\subseteq U$. The complement of a set $A$ is denoted as $A^{c}$.

A subset $Y$ of $X$ is said to be nearly compact relative to $X$ if every
cover $\mathcal{U}=\{U_{\alpha }:\alpha \in \Delta \}$ of $X$ by open sets
of $X$ contains a finite subfamily $\mathcal{U}_{0}=\{U_{1},U_{2},\ldots
,U_{n}\}$ such that $Y\subseteq \bigcup_{i=0}^{n}\limfunc{int}\left(
\limfunc{cl}\left( U_{i}\right) \right) $, or equivalently, $Y$ is covered
by finitely regular open sets of $X$. If $Y=X$ is nearly compact relative to
$X$, the space $X$ is called nearly compact. In other words, the space $X$
is nearly compact if and only if every regular open cover of $X$ contains a
finite subcover, see \cite{singal}.

A function $f:X\rightarrow Y$ is called $R$-map \cite{carnahan} if the
inverse image of every regular open set in $Y$ is regular open in $X$. If $%
f:X\rightarrow X$ is an $R$-map, then the pair $\left( X,f\right) $ is
called topological $R$-dynamical system, and if $X$ is nearly compact, $%
\left( X,f\right) $ is called nearly compact $R$-dynamical system, see \cite%
{gulam}. Now, we recall some definitions of topological nearly entropy and
its fundamental properties from \cite{gulam} as follows.

\begin{definition}
\cite{gulam} Let $\left( X,f\right) $ be a topological $R$-dynamical system,
$\mathcal{U}$ be a regular open cover of $X$ and $K$ be a nonempty nearly
compact relative to the space $X$ such that $f\left( K\right) \subseteq K$.
Let $N_{K}\left( \mathcal{U}\right) =\min \left\{ \limfunc{card}\left(
\mathcal{V}\right) :\mathcal{V}\text{ is a subcover of }\mathcal{U}%
,K\subseteq \dbigcup\limits_{V\in \mathcal{V}}V\right\} $. Since $K$ is a
nearly compact relative to $X$, $N_{K}\left( \mathcal{U}\right) $ is a
positive integer. Let $M_{K}\left( \mathcal{U}\right) =\log N_{K}\left(
\mathcal{U}\right) $.
\end{definition}

\begin{theorem}
\label{Th2.1}\cite{gulam} Let $\left( X,f\right) $ be a topological $R$%
-dynamical system. Let $\mathcal{U}$ and $\mathcal{V}$ be a regular open
covers of $X$, and $K$ be a nearly compact relative to $X$ such that $%
f\left( K\right) \subseteq K$. Then the following statements hold:

$\left( a\right) $ $M_{K}\left( \mathcal{U}\right) \geq 0$

$\left( b\right) $ $\mathcal{U}\prec \mathcal{V}$ implies $M_{K}\left(
\mathcal{U}\right) \leq M_{K}\left( \mathcal{V}\right) $

$\left( c\right) $ $M_{K}\left( \mathcal{U}\vee \mathcal{V}\right) \leq
M_{K}\left( \mathcal{U}\right) +M_{K}\left( \mathcal{V}\right) $

$\left( d\right) $ $M_{K}\left( f^{-1}\left( \mathcal{U}\right) \right) \leq
M_{K}\left( \mathcal{U}\right) $. When $f\left( K\right) =K$, the equality
holds.
\end{theorem}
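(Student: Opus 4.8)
The plan is to treat the four parts separately, each time reducing the claim to an inequality between the integers $N_{K}$ and then applying the monotonicity of $\log $. The one principle driving every case is that $N_{K}\left( \mathcal{U}\right) $ is the least cardinality of a subfamily of $\mathcal{U}$ that still covers $K$, so exhibiting any subcover of $K$ immediately bounds $N_{K}$ from above. For part $(a)$, since $K\neq \emptyset $ every subcover of $K$ must contain at least one set, whence $N_{K}\left( \mathcal{U}\right) \geq 1$ and $M_{K}\left( \mathcal{U}\right) =\log N_{K}\left( \mathcal{U}\right) \geq 0$. For part $(b)$, I would start from a minimal subcover $\mathcal{W}\subseteq \mathcal{V}$ of $K$ with $\limfunc{card}\left( \mathcal{W}\right) =N_{K}\left( \mathcal{V}\right) $; using $\mathcal{U}\prec \mathcal{V}$ I choose for each $W\in \mathcal{W}$ some $U_{W}\in \mathcal{U}$ with $W\subseteq U_{W}$, and the family $\left\{ U_{W}\right\} $ then covers $K$ with at most $N_{K}\left( \mathcal{V}\right) $ members, giving $N_{K}\left( \mathcal{U}\right) \leq N_{K}\left( \mathcal{V}\right) $.

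For part $(c)$, I would fix minimal subcovers $\left\{ U_{1},\ldots ,U_{m}\right\} $ and $\left\{ V_{1},\ldots ,V_{n}\right\} $ of $K$ drawn from $\mathcal{U}$ and $\mathcal{V}$, with $m=N_{K}\left( \mathcal{U}\right) $ and $n=N_{K}\left( \mathcal{V}\right) $. The collection $\left\{ U_{i}\cap V_{j}\right\} $ is a subfamily of $\mathcal{U}\vee \mathcal{V}$ that covers $K$, since any $x\in K$ lies in some $U_{i}$ and some $V_{j}$, hence in $U_{i}\cap V_{j}$; its cardinality is at most $mn$. Thus $N_{K}\left( \mathcal{U}\vee \mathcal{V}\right) \leq N_{K}\left( \mathcal{U}\right) N_{K}\left( \mathcal{V}\right) $, and the subadditivity of $M_{K}$ follows from $\log \left( mn\right) =\log m+\log n$.

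Part $(d)$ is where the genuine work lies. First I must check that $f^{-1}\left( \mathcal{U}\right) $ is a legitimate regular open cover, so that $N_{K}\left( f^{-1}\left( \mathcal{U}\right) \right) $ is even defined; this is precisely where the $R$-map hypothesis enters, since it makes $f^{-1}\left( U\right) $ regular open for each regular open $U$, while the covering property is inherited because $\mathcal{U}$ covers $X$. For the inequality itself, I take a minimal subcover $\left\{ U_{1},\ldots ,U_{m}\right\} $ of $K$ from $\mathcal{U}$ and use $f\left( K\right) \subseteq K\subseteq \bigcup_{i}U_{i}$ to deduce that $\left\{ f^{-1}\left( U_{1}\right) ,\ldots ,f^{-1}\left( U_{m}\right) \right\} $ covers $K$, which yields $N_{K}\left( f^{-1}\left( \mathcal{U}\right) \right) \leq N_{K}\left( \mathcal{U}\right) $.

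The reverse inequality, needed for the equality case, assumes $f\left( K\right) =K$. Starting from a minimal subcover $\left\{ f^{-1}\left( U_{i_{1}}\right) ,\ldots ,f^{-1}\left( U_{i_{k}}\right) \right\} $ of $K$, the inclusion $K\subseteq \bigcup_{j}f^{-1}\left( U_{i_{j}}\right) $ rewrites as $f\left( K\right) \subseteq \bigcup_{j}U_{i_{j}}$, and the surjectivity $f\left( K\right) =K$ upgrades this to $K\subseteq \bigcup_{j}U_{i_{j}}$, so that $\left\{ U_{i_{1}},\ldots ,U_{i_{k}}\right\} $ covers $K$ and $N_{K}\left( \mathcal{U}\right) \leq k$. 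Combined with the first direction this forces equality. The main obstacle I anticipate is the bookkeeping in this last step: I must verify that distinct preimages $f^{-1}\left( U_{i_{j}}\right) $ arise from distinct sets $U_{i_{j}}$, so that no cardinality is lost when passing back to $\mathcal{U}$, and I must confirm that it is genuinely the equality $f\left( K\right) =K$, rather than the weaker $f\left( K\right) \subseteq K$, that licenses the passage from the preimage cover to a subcover of $\mathcal{U}$.
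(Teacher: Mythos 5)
Your argument is correct and is the standard one: each part reduces to an inequality between the covering numbers $N_{K}$ by exhibiting an explicit subcover of $K$, with the $R$-map hypothesis guaranteeing that $f^{-1}\left( \mathcal{U}\right) $ is again a regular open cover and the hypothesis $f\left( K\right) =K$ (rather than mere invariance) supplying the reverse inequality in part $\left( d\right) $. The present paper only recalls this theorem from the earlier reference without reproducing its proof, but your reasoning matches the classical Adler--Konheim--McAndrew-style argument that the cited proof follows, and the bookkeeping worry you raise at the end is harmless since passing from the preimage subcover back to $\left\{ U_{i_{1}},\ldots ,U_{i_{k}}\right\} $ can only decrease cardinality, which is the direction you need.
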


Let $f:X\rightarrow X$ be an $R$-map and let $\left( X,f\right) $ be a
topological $R$-dynamical system. Denote by $H\left( X,f\right) $ the set of
all $f$-invariant nonempty nearly compact relative to $X$, i.e.,
\begin{equation*}
\left\{ K\in P\left( X\right) \setminus \left\{ \emptyset \right\} :\text{ }K%
\text{ is nearly compact relative to }X\text{ such that }f(K)\subseteq
K\right\} .
\end{equation*}%
If $X$ is nearly compact, it follows from $f\left( X\right) \subseteq X$
that $H\left( X,f\right) \neq \emptyset $. However, when $X$ is not nearly
compact, $H\left( X,f\right) $ could be empty.

\begin{definition}
\cite{gulam}\label{def_N1} Let $\left( X,f\right) $ be a topological $R$%
-dynamical system. $H\left( X,f\right) $ be the set $\left\{ K\in P\left(
X\right) \setminus \left\{ \emptyset \right\} :K\text{ is nearly compact
relative to }X\text{ such that }f\left( K\right) \subseteq K\right\} $ and $%
\mathcal{U}$ be a regular open cover of $X$. Then for $K\in H\left(
X,f\right) $
\begin{equation*}
Ent_{N}\left( f,\mathcal{U},K\right) =\lim_{n\rightarrow \infty }\frac{1}{n}%
M_{K}\left( \bigvee_{i=0}^{n-1}f^{-i}\left( \mathcal{U}\right) \right)
\end{equation*}%
is called the topological nearly entropy of $f$ on $K$ relative to $\mathcal{%
U}$ and
\begin{equation*}
Ent_{N}\left( f,K\right) =\sup_{\mathcal{U}}\left\{ Ent_{N}\left( f,\mathcal{%
U},K\right) :\mathcal{U}\ \text{is a regular open cover of}\ X\right\}
\end{equation*}%
is called topological nearly entropy of $f$ on $K$.
\end{definition}

\begin{theorem}
\cite{gulam}\label{Th 2.2} Let $\left( X,f\right) $ be a topological $R$%
-dynamical system, $K_{1},K_{2}\in H\left( X,f\right) $ with $K_{1}\subseteq
K_{2}$ and $\mathcal{U}$ be a regular open cover of $X$. Then

$\left( a\right) $ $Ent_{N}\left( f,\mathcal{U},K_{1}\right) \leq
Ent_{N}\left( f,\mathcal{U},K_{2}\right) $ and

$\left( b\right) $ $Ent_{N}\left( f,K_{1}\right) \leq Ent_{N}\left(
f,K_{2}\right) $.
\end{theorem}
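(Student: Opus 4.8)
The plan is to prove both parts by tracking the monotonicity established for $M_K$ in Theorem \ref{Th2.1} through the limit and supremum that define the nearly entropy. The whole argument rests on the key observation that enlarging the invariant set $K$ can only make covering harder, so the minimal cardinality counting function $N_K$ is monotone in $K$. Specifically, for any subcover $\mathcal{V}$ of a regular open cover $\mathcal{W}$, if $K_2 \subseteq \bigcup_{V \in \mathcal{V}} V$ then automatically $K_1 \subseteq \bigcup_{V \in \mathcal{V}} V$ since $K_1 \subseteq K_2$; hence every subcover that covers $K_2$ also covers $K_1$, which forces
\begin{equation*}
N_{K_1}\left( \mathcal{W}\right) \leq N_{K_2}\left( \mathcal{W}\right),
\end{equation*}
and therefore $M_{K_1}\left( \mathcal{W}\right) \leq M_{K_2}\left( \mathcal{W}\right)$ after taking logarithms (the log being monotone). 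This is the core inequality from which everything else follows.

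For part $(a)$, I would apply this core inequality to the specific cover $\mathcal{W} = \bigvee_{i=0}^{n-1} f^{-i}\left( \mathcal{U}\right)$, which is itself a regular open cover of $X$ because $f$ is an $R$-map (so each $f^{-i}(\mathcal{U})$ consists of regular open sets) and finite joins of regular open covers are regular open covers. This yields
\begin{equation*}
M_{K_1}\left( \bigvee_{i=0}^{n-1}f^{-i}\left( \mathcal{U}\right) \right) \leq M_{K_2}\left( \bigvee_{i=0}^{n-1}f^{-i}\left( \mathcal{U}\right) \right)
\end{equation*}
for every $n$. Multiplying by $\tfrac{1}{n}$ preserves the inequality, and since both limits defining $Ent_N\left( f,\mathcal{U},K_1\right)$ and $Ent_N\left( f,\mathcal{U},K_2\right)$ exist (granted by Definition \ref{def_N1}), passing to the limit as $n \to \infty$ preserves the inequality by monotonicity of limits. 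This gives $Ent_N\left( f,\mathcal{U},K_1\right) \leq Ent_N\left( f,\mathcal{U},K_2\right)$, which is exactly part $(a)$.

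For part $(b)$, I would fix an arbitrary regular open cover $\mathcal{U}$ and use part $(a)$ together with the definition of the supremum. Since $Ent_N\left( f,\mathcal{U},K_1\right) \leq Ent_N\left( f,\mathcal{U},K_2\right) \leq \sup_{\mathcal{U}} Ent_N\left( f,\mathcal{U},K_2\right) = Ent_N\left( f,K_2\right)$, the quantity $Ent_N\left( f,K_2\right)$ is an upper bound for the set $\{ Ent_N\left( f,\mathcal{U},K_1\right) : \mathcal{U} \text{ a regular open cover} \}$. Taking the supremum over $\mathcal{U}$ on the left then gives $Ent_N\left( f,K_1\right) \leq Ent_N\left( f,K_2\right)$, completing the proof.

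I do not anticipate a genuine obstacle here; the result is a routine monotonicity transfer. The only point requiring mild care is confirming that $\bigvee_{i=0}^{n-1} f^{-i}\left( \mathcal{U}\right)$ remains a regular open cover of $X$ so that $M_{K_j}$ is well defined on it — this is where the $R$-map hypothesis is essential, since general continuous maps need not pull back regular open sets to regular open sets. Everything else is an application of the already-established inequality $M_{K_1} \leq M_{K_2}$ together with the order-preserving nature of scaling by $1/n$, taking limits, and taking suprema.
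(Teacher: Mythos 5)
Your proof is correct: the monotonicity $N_{K_1}(\mathcal{W})\leq N_{K_2}(\mathcal{W})$ for $K_1\subseteq K_2$, applied to $\mathcal{W}=\bigvee_{i=0}^{n-1}f^{-i}(\mathcal{U})$ and passed through the limit and the supremum, is exactly the standard argument; the present paper recalls this theorem from \cite{gulam} without reproducing a proof, and your route is the expected one. Your side remark that the $R$-map hypothesis is needed for $\bigvee_{i=0}^{n-1}f^{-i}(\mathcal{U})$ to remain a regular open cover (together with the fact that finite intersections of regular open sets are regular open) is also the right point of care.
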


\begin{definition}
\cite{gulam}\label{def_N2} Let $\left( X,f\right) $ be a topological $R$%
-dynamical system. Then%
\begin{equation*}
Ent_{N}\left( f\right) =\sup_{K}\left\{ Ent_{N}\left( f,K\right) :K\in
H\left( X,f\right) \right\}
\end{equation*}%
is called topological nearly entropy of $f$. For the special case $H\left(
X,f\right) =\emptyset ,Ent_{N}\left( f\right) =0$ is to be taken.
\end{definition}

\section{Further Properties of Topological Nearly Entropy}

Several fundamental properties of topological nearly entropy has been proved
in our previous paper (see \cite{gulam}). Now we shall prove another further
result about topological nearly entropy. The topological nearly entropy of a
map $f$ is coincide with topological nearly entropy of it inverse $f^{-1}$
whenever $f,f^{-1}$ are bijections and $R$-maps as follows.

\begin{theorem}
Let $\left( X,f\right) $ be a topological $R$-dynamical system and the
function $f:X\rightarrow X$ is bijective. Assume that $f^{-1}:X\rightarrow X$
is $R$-map, then if $K\in H\left( X,f\right) $ and $f,f^{-1}:K\rightarrow K$
are bijections and $R$-maps, then $Ent_{N}\left( f\right) =Ent_{N}\left(
f^{-1}\right) $.
\end{theorem}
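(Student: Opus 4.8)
The plan is to reduce the global equality to a cover-by-cover comparison, in the spirit of the classical proof that a homeomorphism and its inverse have equal topological entropy, and then to push that comparison through the $M_{K}$ machinery of Theorem \ref{Th2.1}. The starting point is that, since $f$ is a bijective $R$-map whose inverse is also an $R$-map, both $f\left( \mathcal{W}\right) $ and $f^{-1}\left( \mathcal{W}\right) $ are regular open covers of $X$ whenever $\mathcal{W}$ is: surjectivity of $f$ (resp.\ $f^{-1}$) gives the covering property, while the $R$-map hypothesis on $f^{-1}$ (resp.\ $f$) gives regular openness of the images. I would first record the two normalizing identities $M_{K}\left( f\left( \mathcal{W}\right) \right) =M_{K}\left( \mathcal{W}\right) $ and $M_{K}\left( f^{-1}\left( \mathcal{W}\right) \right) =M_{K}\left( \mathcal{W}\right) $. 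The former follows by applying Theorem \ref{Th2.1}$(d)$ to the $R$-dynamical system $\left( X,f^{-1}\right) $, because $f^{-1}:K\rightarrow K$ being a bijection gives $f^{-1}\left( K\right) =K$ and hence the equality case; the latter is Theorem \ref{Th2.1}$(d)$ applied directly to $\left( X,f\right) $, using $f\left( K\right) =K$.

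The heart of the argument is the identity relating the two $n$-fold joins. Because $f$ is a bijection it commutes with intersections, so applying $f^{n-1}$ to each member set and reindexing by $j=n-1-i$ yields
\[
f^{n-1}\left( \bigvee_{i=0}^{n-1}f^{-i}\left( \mathcal{U}\right) \right) =\bigvee_{j=0}^{n-1}f^{j}\left( \mathcal{U}\right) =\bigvee_{i=0}^{n-1}\left( f^{-1}\right) ^{-i}\left( \mathcal{U}\right) .
\]
I would verify here that each intermediate family $f^{k}\left( \mathcal{U}\right) $ and $f^{-k}\left( \mathcal{U}\right) $ is a regular open cover, so that every term on which $M_{K}$ acts is legitimate; this rests on the standard fact that finite intersections of regular open sets are again regular open, which makes the joins regular open covers.

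Combining the two ingredients, iterating the normalizing identity $n-1$ times on the regular open cover $\bigvee_{i=0}^{n-1}f^{-i}\left( \mathcal{U}\right) $ gives
\[
M_{K}\left( \bigvee_{i=0}^{n-1}\left( f^{-1}\right) ^{-i}\left( \mathcal{U}\right) \right) =M_{K}\left( f^{n-1}\left( \bigvee_{i=0}^{n-1}f^{-i}\left( \mathcal{U}\right) \right) \right) =M_{K}\left( \bigvee_{i=0}^{n-1}f^{-i}\left( \mathcal{U}\right) \right) .
\]
Dividing by $n$ and letting $n\rightarrow \infty $ shows $Ent_{N}\left( f^{-1},\mathcal{U},K\right) =Ent_{N}\left( f,\mathcal{U},K\right) $ for every regular open cover $\mathcal{U}$, and taking the supremum over $\mathcal{U}$ yields $Ent_{N}\left( f^{-1},K\right) =Ent_{N}\left( f,K\right) $.

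Finally I would pass to the supremum over invariant sets, and this is where the main obstacle lies: $Ent_{N}\left( f\right) $ ranges over $H\left( X,f\right) $ whereas $Ent_{N}\left( f^{-1}\right) $ ranges over $H\left( X,f^{-1}\right) $, so I must argue that these index sets agree on the class of $K$ under consideration rather than merely comparing a fixed $K$. The bijectivity hypotheses supply exactly this: if $f,f^{-1}:K\rightarrow K$ are bijections then $f\left( K\right) =K=f^{-1}\left( K\right) $, whence $K\in H\left( X,f\right) $ if and only if $K\in H\left( X,f^{-1}\right) $, and for each such $K$ the two entropies coincide by the previous step. Therefore the suprema are taken over the same collection and are equal termwise, giving $Ent_{N}\left( f\right) =Ent_{N}\left( f^{-1}\right) $.
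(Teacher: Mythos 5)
Your proposal is correct and follows essentially the same route as the paper: both rest on the equality case of Theorem \ref{Th2.1}$(d)$ (valid because $f\left( K\right) =K=f^{-1}\left( K\right) $) together with the conjugation identity $\bigvee_{i=0}^{n-1}f^{i}\left( \mathcal{U}\right) =f^{n-1}\left( \bigvee_{i=0}^{n-1}f^{-i}\left( \mathcal{U}\right) \right) $, followed by the suprema over covers and over invariant sets. The only cosmetic differences are that you apply $f^{n-1}$ forward where the paper applies $f^{-\left( n-1\right) }$ backward, and you are somewhat more explicit about checking that the intermediate families are regular open covers and that $H\left( X,f\right) $ and $H\left( X,f^{-1}\right) $ coincide on the relevant class.
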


\begin{proof}
If $H\left( X,f\right) =\emptyset $, then $Ent_{N}\left( f\right) =0$ and $%
Ent_{N}\left( f\right) \leq Ent_{N}\left( f^{-1}\right) $. So, let $H\left(
X,f\right) \neq \emptyset $. Now consider $K\in H\left( X,f\right) $, from
the asumption $f,f^{-1}:K\rightarrow K$ are bijections and $R$-maps, we have
$K\in H\left( X,f^{-1}\right) $ so that $H\left( X,f\right) \subseteq
H\left( X,f^{-1}\right) $. Let $\mathcal{U}$ be any regular open cover of $X$
and $K\in H\left( X,f\right) $. Since $f:K\rightarrow K$ is bijective and $R$%
-map, then $f\left( K\right) =K$. By Theorem 2.1(d) in \cite{gulam}, $%
M_{K}\left( f^{-1}\left( \mathcal{U}\right) \right) =M_{K}\left( \mathcal{U}%
\right) $. So,%
\begin{eqnarray*}
Ent_{N}\left( f^{-1},\mathcal{U},K\right) &=&\lim_{n\rightarrow \infty }%
\frac{1}{n}M_{K}\left( \bigvee_{i=0}^{n-1}f^{i}\left( \mathcal{U}\right)
\right) \\
&=&\lim_{n\rightarrow \infty }\frac{1}{n}M_{K}\left( f^{-\left( n-1\right)
}\left( \bigvee_{i=0}^{n-1}f^{i}\left( \mathcal{U}\right) \right) \right) \\
&=&\lim_{n\rightarrow \infty }\frac{1}{n}M_{K}\left(
\bigvee_{i=0}^{n-1}f^{-i}\left( \mathcal{U}\right) \right) \\
&=&Ent_{N}\left( f,\mathcal{U},K\right) .
\end{eqnarray*}%
Then, we have $Ent_{N}\left( f\right) \leq Ent_{N}\left( f^{-1}\right) $ by
Definitions \ref{def_N1} and \ref{def_N2}.

By interchange the role of $f$ and $f^{-1}$ in the preceding argument, we
will have $Ent_{N}\left( f^{-1}\right) \leq Ent_{N}\left( \left(
f^{-1}\right) ^{-1}\right) =Ent_{N}\left( f\right) $. Thus, $Ent_{N}\left(
f\right) =Ent_{N}\left( f^{-1}\right) $.
\end{proof}

\section{Topological Nearly Entropy on Nearly Compact Spaces}

In this section, we define topological nearly entropy on nearly compact
space by using $R$-map. Moreover, we show that this definition is equivalent
to the previous definition in, see Definition \ref{def_N2} whenever $X$ is
nearly compact.

\begin{definition}
\label{topo n} Let $\left( X,f\right) $ be a nearly compact $R$-dynamical
system and $\mathcal{U}$ be a regular open cover of $X$. Let%
\begin{equation*}
N_{n}\left( \mathcal{U}\right) =\min \left\{ \limfunc{card}\left( \mathcal{V}%
\right) :\mathcal{V}\text{ is a subcover of }\mathcal{U},X=\bigcup_{V\in
\mathcal{V}}V\right\} .
\end{equation*}%
Since $X$ is nearly compact, $N_{n}\left( \mathcal{U}\right) $ is a positive
integer. Let $M_{n}\left( \mathcal{U}\right) =\log N_{n}\left( \mathcal{U}%
\right) .$
\end{definition}

\begin{theorem}
\label{thm_n2.1} Let $\left( X,f\right) $ be a nearly compact $R$-dynamical
system and $\mathcal{U}$ and $\mathcal{V}$ be a regular open covers of $X$.
Then the following statements hold:

\begin{enumerate}
\item[$\left( a\right) $] $M_{n}\left( \mathcal{U}\right) \geq 0$.

\item[$\left( b\right) $] $\mathcal{U}\prec \mathcal{V}$ implies $%
M_{n}\left( \mathcal{U}\right) \leq M_{n}\left( \mathcal{V}\right) $.

\item[$\left( c\right) $] $M_{n}\left( \mathcal{U}\vee \mathcal{V}\right)
\leq M_{n}\left( \mathcal{U}\right) +M_{n}\left( \mathcal{V}\right) $.

\item[$\left( d\right) $] $M_{n}\left( f^{-1}\left( \mathcal{U}\right)
\right) \leq M_{n}\left( \mathcal{U}\right) $. The equality holds when $f$
is onto.
\end{enumerate}
\end{theorem}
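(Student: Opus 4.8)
The plan is to handle the four statements in order, essentially transporting the argument for $M_{K}$ in Theorem \ref{Th2.1} to the whole space, since $N_{n}\left( \mathcal{U}\right) $ is just $N_{X}\left( \mathcal{U}\right) $ once I check that the relevant subfamilies cover all of $X$. For $\left( a\right) $, because $X$ is nonempty and nearly compact, any subcover realizing $N_{n}\left( \mathcal{U}\right) $ contains at least one set, so $N_{n}\left( \mathcal{U}\right) \geq 1$ and hence $M_{n}\left( \mathcal{U}\right) =\log N_{n}\left( \mathcal{U}\right) \geq 0$. For $\left( b\right) $, I would begin from a minimal subcover $\left\{ V_{1},\ldots ,V_{k}\right\} $ of $\mathcal{V}$ with $k=N_{n}\left( \mathcal{V}\right) $; using $\mathcal{U}\prec \mathcal{V}$, for each $V_{j}$ I choose $U_{j}\in \mathcal{U}$ with $V_{j}\subseteq U_{j}$, and then $\left\{ U_{1},\ldots ,U_{k}\right\} $ still covers $X$, giving a subcover of $\mathcal{U}$ of cardinality at most $k$ and thus $N_{n}\left( \mathcal{U}\right) \leq N_{n}\left( \mathcal{V}\right) $. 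For $\left( c\right) $, I take minimal subcovers $\left\{ U_{1},\ldots ,U_{p}\right\} $ and $\left\{ V_{1},\ldots ,V_{q}\right\} $ realizing $N_{n}\left( \mathcal{U}\right) $ and $N_{n}\left( \mathcal{V}\right) $; the family $\left\{ U_{i}\cap V_{j}\right\} $ is a subcover of $\mathcal{U}\vee \mathcal{V}$ of cardinality at most $pq$, so $N_{n}\left( \mathcal{U}\vee \mathcal{V}\right) \leq N_{n}\left( \mathcal{U}\right) N_{n}\left( \mathcal{V}\right) $ and subadditivity follows after taking logarithms. In both cases the only care needed is that replacing a family can only decrease the count, since empty or repeated members are harmless, which keeps every inequality pointing the right way.

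Part $\left( d\right) $ is where the real content lies, and it splits into two tasks. First I must confirm that $f^{-1}\left( \mathcal{U}\right) $ is genuinely a regular open cover of $X$: regularity of each $f^{-1}\left( U\right) $ is exactly the defining property of an $R$-map, and the covering property holds because for every $x\in X$ the point $f\left( x\right) $ lies in some $U\in \mathcal{U}$, whence $x\in f^{-1}\left( U\right) $. Then, from a minimal subcover $\left\{ U_{1},\ldots ,U_{k}\right\} $ of $\mathcal{U}$, the sets $f^{-1}\left( U_{1}\right) ,\ldots ,f^{-1}\left( U_{k}\right) $ cover $X$ since $\bigcup_{i}f^{-1}\left( U_{i}\right) =f^{-1}\bigl( \bigcup_{i}U_{i}\bigr) =f^{-1}\left( X\right) =X$, which yields $N_{n}\left( f^{-1}\left( \mathcal{U}\right) \right) \leq N_{n}\left( \mathcal{U}\right) $ and hence the inequality.

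For the equality under surjectivity I would argue in reverse: starting from a minimal subcover $\left\{ f^{-1}\left( U_{1}\right) ,\ldots ,f^{-1}\left( U_{m}\right) \right\} $ of $f^{-1}\left( \mathcal{U}\right) $, I claim $\left\{ U_{1},\ldots ,U_{m}\right\} $ covers $X$. Given $y\in X$, surjectivity supplies $x$ with $f\left( x\right) =y$; since $x$ lies in some $f^{-1}\left( U_{i}\right) $, we get $y=f\left( x\right) \in U_{i}$. Hence $N_{n}\left( \mathcal{U}\right) \leq m=N_{n}\left( f^{-1}\left( \mathcal{U}\right) \right) $, which combined with the previous inequality forces equality. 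The main obstacle is simply keeping the direction of each containment straight and remembering that $N_{n}$ counts cardinalities of subfamilies: non-injectivity of $f$ can only collapse distinct $U_{i}$ onto a common preimage and so never violates the $\leq$ direction, while surjectivity is precisely what rules out a strict drop and delivers the reverse inequality.
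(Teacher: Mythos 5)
Your proposal is correct and follows the standard counting arguments that the paper itself invokes: the paper's proof of this theorem is only a one-line deferral to the analogous Theorem 2.1 of \cite{gulam} with $K$ replaced by $X$, and your expanded argument (minimal subcovers for (a)--(c), the $R$-map property plus preimages for the inequality in (d), and surjectivity for the reverse inequality) is exactly the intended "slight modification." Nothing further is needed.
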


\begin{proof}
The proofs are almost analogous to the proof of Theorem \ref{Th2.1} (see
\cite{gulam}) by replacing the role of $K$ as $X$ and slight modification of
the proof, one can easily establish this theorem.
\end{proof}

\begin{theorem}
\label{lim} Let $\left( X,f\right) $ be a nearly compact $R$-dynamical
system and $\mathcal{U}$ be a regular open cover of $X$. Then
\begin{equation*}
\lim_{m\rightarrow \infty }\frac{1}{m}M_{n}\left(
\bigvee_{i=0}^{m-1}f^{-i}\left( \mathcal{U}\right) \right)
\end{equation*}%
exists.
\end{theorem}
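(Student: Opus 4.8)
The plan is to recognize this as a direct application of Fekete's subadditivity lemma: any sequence $\left(a_{m}\right)$ of nonnegative reals satisfying $a_{m+k}\leq a_{m}+a_{k}$ has the property that $\lim_{m\rightarrow\infty}\frac{a_{m}}{m}$ exists and in fact equals $\inf_{m}\frac{a_{m}}{m}$. Accordingly, I would set
\begin{equation*}
a_{m}=M_{n}\left(\bigvee_{i=0}^{m-1}f^{-i}\left(\mathcal{U}\right)\right)
\end{equation*}
and reduce the theorem to verifying that this sequence is nonnegative and subadditive. Nonnegativity is immediate from Theorem \ref{thm_n2.1}$\left(a\right)$, and it also guarantees the infimum is bounded below by $0$, so the limit is finite.

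The heart of the argument is subadditivity. First I would record the decomposition
\begin{equation*}
\bigvee_{i=0}^{m+k-1}f^{-i}\left(\mathcal{U}\right)=\left(\bigvee_{i=0}^{m-1}f^{-i}\left(\mathcal{U}\right)\right)\vee f^{-m}\left(\bigvee_{j=0}^{k-1}f^{-j}\left(\mathcal{U}\right)\right),
\end{equation*}
which follows by reindexing the tail terms $i=m,\ldots,m+k-1$ as $j=i-m$ and then pulling $f^{-m}$ outside the join; this last step uses that $f^{-m}$ commutes with $\vee$, since preimages respect intersections. Applying Theorem \ref{thm_n2.1}$\left(c\right)$ to this decomposition gives
\begin{equation*}
a_{m+k}\leq a_{m}+M_{n}\left(f^{-m}\left(\bigvee_{j=0}^{k-1}f^{-j}\left(\mathcal{U}\right)\right)\right).
\end{equation*}

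To bound the last term by $a_{k}$, I would iterate Theorem \ref{thm_n2.1}$\left(d\right)$, which asserts $M_{n}\left(f^{-1}\left(\mathcal{W}\right)\right)\leq M_{n}\left(\mathcal{W}\right)$ for any regular open cover $\mathcal{W}$; applying it $m$ times to $\mathcal{W}=\bigvee_{j=0}^{k-1}f^{-j}\left(\mathcal{U}\right)$ yields $M_{n}\left(f^{-m}\left(\mathcal{W}\right)\right)\leq M_{n}\left(\mathcal{W}\right)=a_{k}$. Combining the two displayed inequalities gives $a_{m+k}\leq a_{m}+a_{k}$, and Fekete's lemma then delivers the existence of the limit. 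I do not anticipate a serious obstacle: the only points requiring care are the reindexing identity for the join and the observation that each $f^{-i}\left(\mathcal{U}\right)$ is again a regular open cover of $X$ so that Definition \ref{topo n} applies, both of which rest on $f$ being an $R$-map.
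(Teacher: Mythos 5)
Your proposal is correct and is exactly the standard subadditivity argument (Fekete's lemma applied to $a_{m}=M_{n}\bigl(\bigvee_{i=0}^{m-1}f^{-i}\left(\mathcal{U}\right)\bigr)$ via parts $(c)$ and $(d)$ of Theorem \ref{thm_n2.1}) that the paper itself invokes by referring to the analogous proof of Theorem 2.3 in \cite{gulam}. In fact you supply more detail than the paper does, since the paper only sketches the proof by reference.
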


\begin{proof}
The proof are almost analogous to the proof af Theorem 2.3 in \cite{gulam}
by replacing the role of $K$ as $X$ and slight modification of the proof,
one can easily establish this theorem.
\end{proof}

\begin{definition}
\label{def topo N} Let $\left( X,f\right) $ be a nearly compact $R$%
-dynamical system and $\mathcal{U}$ be a regular open cover of $X$. Then
\begin{equation*}
Ent_{n}\left( f,\mathcal{U}\right) =\lim_{m\rightarrow \infty }\frac{1}{m}%
M_{n}\left( \bigvee_{i=0}^{m-1}f^{-i}\left( \mathcal{U}\right) \right)
\end{equation*}%
is called topological nearly entropy of $f$ relative to $\mathcal{U}$ and
\begin{equation*}
Ent_{n}\left( f\right) =\sup_{\mathcal{U}}\left\{ Ent_{n}\left( f,\mathcal{U}%
\right) :\mathcal{U}\mathrm{\ }\text{is~a regular~open~of}~X\right\}
\end{equation*}%
is called the topological nearly entropy of $f$.
\end{definition}

In the next theorem, we show the relationship between this new notion with
the previous Definition \ref{def_N1}.

\begin{theorem}
\label{n=N} Let $\left( X,f\right) $ be a topological $R$-dynamical system, $%
K\in H\left( X,f\right) $ and $\mathcal{U}$ be any regular open cover of $X$%
. Then for the subspace $K$, $Ent_{n}\left( f|_{K},\mathcal{U}|_{K}\right)
=Ent_{N}\left( f,\mathcal{U},K\right) $, where $\mathcal{U}|_{K}=\left\{
U\cap K:U\in \mathcal{U}\right\} $ and mapping $f|_{K}:K\rightarrow K$ is
defined by $f|_{K}\left( k\right) =f\left( k\right) $ for each $k\in K$.
\end{theorem}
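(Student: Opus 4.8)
The plan is to reduce the equality of the two dynamical quantities to a single combinatorial identity between covers and then pass to the limit. The heart of the matter is a \emph{counting lemma}: for every regular open cover $\mathcal{W}$ of $X$ one has $N_{n}\left(\mathcal{W}|_{K}\right)=N_{K}\left(\mathcal{W}\right)$, and hence $M_{n}\left(\mathcal{W}|_{K}\right)=M_{K}\left(\mathcal{W}\right)$. To prove this I would exploit the trace map $W\mapsto W\cap K$. If $\left\{W_{1},\dots ,W_{r}\right\}\subseteq\mathcal{W}$ is a subfamily with $K\subseteq\bigcup_{j=1}^{r}W_{j}$, then $\left\{W_{1}\cap K,\dots ,W_{r}\cap K\right\}\subseteq\mathcal{W}|_{K}$ covers the space $K$, giving $N_{n}\left(\mathcal{W}|_{K}\right)\leq N_{K}\left(\mathcal{W}\right)$; conversely, from a subfamily of $\mathcal{W}|_{K}$ covering $K$ I pick one preimage in $\mathcal{W}$ for each of its members, producing a subcover of $\mathcal{W}$ for $K$ of the same cardinality, which yields the reverse inequality.

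Next I would push the lemma through the dynamics by showing
\[
\left(\bigvee_{i=0}^{m-1}f^{-i}\left(\mathcal{U}\right)\right)\Big|_{K}=\bigvee_{i=0}^{m-1}\left(f|_{K}\right)^{-i}\left(\mathcal{U}|_{K}\right)
\]
as covers of $K$. The one computation that makes this work is $\left(f|_{K}\right)^{-i}\left(U\cap K\right)=K\cap f^{-i}\left(U\right)$, which holds \emph{because} $f\left(K\right)\subseteq K$: for $k\in K$ every iterate $f^{i}\left(k\right)$ again lies in $K$, so the condition $f^{i}\left(k\right)\in U\cap K$ collapses to $f^{i}\left(k\right)\in U$. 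Intersecting over $i$ then turns a typical member $\bigcap_{i=0}^{m-1}f^{-i}\left(U_{i}\right)$ of the left-hand join, traced on $K$, into the corresponding member $\bigcap_{i=0}^{m-1}\left(f|_{K}\right)^{-i}\left(U_{i}\cap K\right)$ of the right-hand join.

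Combining the two steps, for each $m$ I obtain
\[
M_{n}\left(\bigvee_{i=0}^{m-1}\left(f|_{K}\right)^{-i}\left(\mathcal{U}|_{K}\right)\right)=M_{K}\left(\bigvee_{i=0}^{m-1}f^{-i}\left(\mathcal{U}\right)\right).
\]
Dividing by $m$ and letting $m\to\infty$ — the limit on the left existing by Theorem \ref{lim} applied to the nearly compact $R$-dynamical system $\left(K,f|_{K}\right)$, and the limit on the right being $Ent_{N}\left(f,\mathcal{U},K\right)$ by Definition \ref{def_N1} — yields $Ent_{n}\left(f|_{K},\mathcal{U}|_{K}\right)=Ent_{N}\left(f,\mathcal{U},K\right)$, as claimed.

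The steps that need the most care are the well-definedness prerequisites rather than the displayed manipulations. I must check that $\mathcal{U}|_{K}$ is genuinely a regular open cover of the subspace $K$ and that $f|_{K}$ is an $R$-map on $K$, so that the symbols $M_{n}$, $Ent_{n}$ and Theorem \ref{lim} are even available for $\left(K,f|_{K}\right)$. A second delicate point in the counting lemma is that the trace map $W\mapsto W\cap K$ need not be injective: distinct members of $\mathcal{W}$ may have the same trace on $K$. This is harmless for the minimum, however, since in any minimal subcover two sets with equal trace on $K$ would be redundant, so a minimal subcover of $\mathcal{W}$ for $K$ has pairwise distinct traces and corresponds bijectively to a subfamily of $\mathcal{W}|_{K}$ of equal cardinality; I would record this observation explicitly to make the equality $N_{n}\left(\mathcal{W}|_{K}\right)=N_{K}\left(\mathcal{W}\right)$ airtight.
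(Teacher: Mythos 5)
Your proposal is correct and follows essentially the same route as the paper: the paper likewise establishes the two inequalities between $N_{K}\bigl(\bigvee_{i=0}^{m-1}f^{-i}(\mathcal{U})\bigr)$ and $N_{n}\bigl(\bigvee_{i=0}^{m-1}(f|_{K})^{-i}(\mathcal{U}|_{K})\bigr)$ by passing subcovers back and forth through the trace map, with the same key computation $(f|_{K})^{-i}(U\cap K)=K\cap f^{-i}(U)$ justified by $f(K)\subseteq K$. Your packaging as a separate counting lemma plus an exact identity of covers is a slightly cleaner modularization of the identical argument, and your explicit attention to the non-injectivity of $W\mapsto W\cap K$ is a point the paper glosses over.
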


\begin{proof}
Let $N_{n}\left( \bigvee_{i=0}^{m-1}\left( f|_{K}\right) ^{-i}\left(
\mathcal{U}|_{K}\right) \right) =s$. Then there exist a finite subfamily $%
\left\{ U_{1},U_{2},\ldots ,U_{s}\right\} $ of $\bigvee_{i=0}^{m-1}$ $\left(
f|_{K}\right) ^{-i}\left( \mathcal{U}|_{K}\right) $ such that $K\subseteq
\bigcup_{l=1}^{s}U_{l}$. Then, for each $l\in \left\{ 1,2,\ldots ,s\right\} $%
, there exist $\left\{ U_{l}^{0},U_{l}^{1},U_{l}^{2},\ldots
,U_{l}^{m-1}\right\} \in \mathcal{U}$ such that $U_{l}=\bigcap_{i=0}^{m-1}%
\left( f|_{K}\right) ^{-i}\left( U_{l}^{i}\cap K\right) $. Clearly, $\left\{
U_{l}^{^{\prime }}:U_{l}^{^{\prime }}=\bigcap_{i=0}^{m-1}f^{-i}\left(
U_{l}^{i}\right) \right\} $ is a subfamily of $\bigvee_{i=0}^{m-1}f^{-i}%
\left( \mathcal{U}\right) $ with $U_{l}\subset U_{l}^{^{\prime }}$.
Moreover,
\begin{equation*}
K=\left( \bigcup_{l=1}^{s}U_{l}\right) \cap K=\bigcup_{l=1}^{s}\left(
U_{l}\cap K\right) \subseteq \bigcup_{l=1}^{s}\left( U_{l}^{^{\prime }}\cap
K\right) \subseteq \bigcup_{l=1}^{s}U_{l}^{^{\prime }}.
\end{equation*}%
Thus,
\begin{equation*}
N_{K}\left( \bigvee_{i=0}^{m-1}f^{-i}\left( \mathcal{U}\right) \right) \leq
s=N_{n}\left( \bigvee_{i=0}^{m-1}\left( f|_{K}\right) ^{-i}\left( \mathcal{U}%
|_{K}\right) \right)
\end{equation*}%
and so $M_{K}\left( \bigvee_{i=0}^{m-1}f^{-i}\left( \mathcal{U}\right)
\right) \leq M_{n}\left( \bigvee_{i=0}^{m-1}\left( f|_{K}\right) ^{-i}\left(
\mathcal{U}|_{K}\right) \right) $. Hence,%
\begin{eqnarray*}
Ent_{N}\left( f,\mathcal{U},K\right) &=&\underset{m\rightarrow \infty }{\lim
}\frac{1}{m}M_{K}\left( \dbigvee\limits_{i=0}^{m-1}f^{-i}\left( \mathcal{U}%
\right) \right) \\
&\leq &\lim_{m\rightarrow \infty }\frac{1}{m}M_{n}\left(
\bigvee_{i=0}^{m-1}\left( f|_{K}\right) ^{-i}\left( \mathcal{U}|_{K}\right)
\right) \\
&=&Ent_{n}\left( f|_{K},\mathcal{U}|_{K}\right) .
\end{eqnarray*}

To establish the reverse inequality, consider $N_{K}\left(
\bigvee_{i=0}^{m-1}f^{-i}\left( \mathcal{U}\right) \right) =q$. Then there
exists a finite subfamily $\left\{ V_{1},V_{2},\ldots ,V_{q}\right\} $ of $%
\bigvee_{i=0}^{m-1}f^{-i}\left( \mathcal{U}\right) $ such that $K\subseteq
\bigcup_{j=1}^{q}V_{j}$. Then for each $j\in \left\{ 1,2,\ldots ,q\right\} $%
, there exist $\left\{ V_{j}^{0},V_{j}^{1},V_{j}^{2},\ldots
,V_{j}^{m-1}\right\} $ $\in \mathcal{U}$ such that $V_{j}=%
\bigcap_{i=0}^{m-1}f^{-i}\left( V_{j}^{i}\right) $. Consider the subfamily
\begin{equation*}
\left\{ V_{j}^{^{\prime }}:V_{j}^{^{\prime }}=\bigcap_{i=0}^{m-1}\left(
f|_{K}\right) ^{-i}\left( V_{j}^{i}\cap K\right) \right\}
\end{equation*}%
of $\bigvee_{i=0}^{m-1}\left( f|_{K}\right) ^{-i}\left( \mathcal{U}%
|_{K}\right) $. Since $K\subseteq \bigcup_{j=1}^{q}V_{j}$, then $%
\bigcup_{j=1}^{q}\left( V_{j}\cap K\right) =K$. Again, $f\left( K\right)
\subseteq K$ ensure that $K\subseteq f^{-i}\left( K\right) $ for each $%
i=0,1,2,\ldots $, and
\begin{equation*}
V_{j}\cap K=\bigcap_{i=0}^{m-1}\left( f^{-i}\left( V_{j}^{i}\right) \cap
K\right) \subseteq \bigcap_{i=0}^{m-1}\left( f^{-i}\left( V_{j}^{i}\right)
\cap f^{-i}\left( K\right) \right) =\bigcap_{i=0}^{m-1}f^{-i}\left(
V_{j}^{i}\cap K\right) .
\end{equation*}%
Thus,
\begin{equation*}
V_{j}\cap K=\left( \bigcap_{i=0}^{m-1}f^{-i}\left( V_{j}^{i}\cap K\right)
\right) \cap K=\bigcap_{i=0}^{m-1}\left( f|_{K}\right) ^{-i}\left(
V_{j}^{i}\cap K\right) =V_{j}^{^{\prime }}
\end{equation*}%
and so
\begin{equation*}
K=\bigcup_{j=1}^{q}\left( V_{j}\cap K\right)
=\bigcup_{j=1}^{q}V_{j}^{^{\prime }}.
\end{equation*}%
Hence, $N_{n}\left( \bigvee_{i=0}^{m-1}\left( f|_{K}\right) ^{-i}\left(
\mathcal{U}|_{K}\right) \right) \leq q=N_{K}\left(
\bigvee_{i=0}^{m-1}f^{-i}\left( \mathcal{U}\right) \right) $ and so,
\begin{equation*}
M_{n}\left( \bigvee_{i=0}^{m-1}\left( f|_{K}\right) ^{-i}\left( \mathcal{U}%
|_{K}\right) \right) \leq M_{K}\left( \bigvee_{i=0}^{m-1}f^{-i}\left(
\mathcal{U}\right) \right) .
\end{equation*}%
Therefore,
\begin{eqnarray*}
Ent_{n}\left( f|_{K},\mathcal{U}|_{K}\right) &=&\lim_{m\rightarrow \infty }%
\frac{1}{m}M_{n}\left( \bigvee_{i=0}^{m-1}\left( f|_{K}\right) ^{-i}\left(
\mathcal{U}|_{K}\right) \right) \\
&\leq &\lim_{m\rightarrow \infty }\frac{1}{m}M_{K}\left(
\bigvee_{i=0}^{m-1}f^{-i}\left( \mathcal{U}\right) \right) \\
&=&Ent_{N}\left( f,\mathcal{U},K\right) .
\end{eqnarray*}%
Thus, $Ent_{n}\left( f|_{K},\mathcal{U}|_{K}\right) =Ent_{N}\left( f,%
\mathcal{U},K\right) .$
\end{proof}

The following theorem indicates that the concept of topological nearly
entropy $Ent_{N}\left( f\right) $ is coincides with $Ent_{n}\left( f\right) $
when $X$ itself is nearly compact.

\begin{theorem}
Let $\left( X,f\right) $ be a nearly compact $R$-dynamical system. Then $%
Ent_{n}\left( f\right) =Ent_{N}\left( f\right) $.
\end{theorem}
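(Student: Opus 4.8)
The plan is to reduce everything to the case $K=X$ and then invoke the two structural results already established, namely the subspace identity of Theorem~\ref{n=N} and the monotonicity of Theorem~\ref{Th 2.2}. First I would observe that because $X$ is nearly compact it is in particular nearly compact relative to itself, and trivially $f(X)\subseteq X$; hence $X\in H(X,f)$, so $X$ is an admissible choice of $f$-invariant nearly compact set. This is precisely what allows us to feed $K=X$ into the earlier theorems.

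Next I would specialize Theorem~\ref{n=N} to the subspace $K=X$. Here $f|_{X}=f$ and, since every $U\in\mathcal{U}$ satisfies $U\subseteq X$, the restricted cover is $\mathcal{U}|_{X}=\{U\cap X:U\in\mathcal{U}\}=\mathcal{U}$. Consequently Theorem~\ref{n=N} yields $Ent_{n}(f,\mathcal{U})=Ent_{N}(f,\mathcal{U},X)$ for every regular open cover $\mathcal{U}$ of $X$. Taking the supremum over all such $\mathcal{U}$ on both sides and comparing with Definitions~\ref{def topo N} and \ref{def_N1} gives $Ent_{n}(f)=Ent_{N}(f,X)$.

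It then remains to identify $Ent_{N}(f,X)$ with $Ent_{N}(f)=\sup_{K}\{Ent_{N}(f,K):K\in H(X,f)\}$. Every $K\in H(X,f)$ satisfies $K\subseteq X$ with $X\in H(X,f)$, so Theorem~\ref{Th 2.2}(b) gives $Ent_{N}(f,K)\leq Ent_{N}(f,X)$; thus the defining supremum is attained at $K=X$, whence $Ent_{N}(f)=Ent_{N}(f,X)$. Combining this with the previous paragraph yields $Ent_{n}(f)=Ent_{N}(f)$, as required.

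I do not expect a genuine obstacle here: the mathematical content is entirely contained in the two cited theorems, and the only points demanding care are the verification that restricting to $K=X$ leaves both the map and the cover unchanged (so that the subspace identity collapses to the whole-space statement), together with the observation that $X$ is the largest element of $H(X,f)$, which is exactly what forces the monotonicity argument to pin the supremum at $K=X$.
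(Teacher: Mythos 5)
Your proposal is correct and follows essentially the same route as the paper: note that $X\in H(X,f)$, apply Theorem~\ref{n=N} with $K=X$ (where $f|_{X}=f$ and $\mathcal{U}|_{X}=\mathcal{U}$) to get $Ent_{n}(f)=Ent_{N}(f,X)$, and identify $Ent_{N}(f)$ with $Ent_{N}(f,X)$. You are in fact slightly more careful than the paper in explicitly invoking the monotonicity of Theorem~\ref{Th 2.2}(b) to justify that the supremum defining $Ent_{N}(f)$ is attained at $K=X$, a step the paper passes over without comment.
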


\begin{proof}
It is clear that $X\in H\left( X,f\right) $ and by Definition \ref{def_N2},%
\begin{equation*}
Ent_{N}\left( f\right) =\sup_{K}\left\{ Ent_{N}\left( f,K\right) :K\in
H\left( X,f\right) \right\} =Ent_{N}\left( f,X\right) .
\end{equation*}%
Again for any regular open cover $\mathcal{U}$ of $X$, Theorem \ref{n=N}
ensures that $Ent_{n}\left( f,\mathcal{U}\right) =Ent_{N}\left( f,\mathcal{U}%
,X\right) $ and so%
\begin{eqnarray*}
Ent_{N}\left( f,X\right) &=&\sup_{\mathcal{U}}\left\{ Ent_{N}\left( f,%
\mathcal{U},X\right) :\mathcal{U}~\text{is~a~regular~open~cover~of}~X\right\}
\\
&=&\sup_{\mathcal{U}}\left\{ Ent_{n}\left( f,\mathcal{U}\right) :\mathcal{U}~%
\text{is~a~regular~open~cover~of}~X\right\} \\
&=&Ent_{n}\left( f\right) .
\end{eqnarray*}%
Thus, $Ent_{N}\left( f\right) =Ent_{N}\left( f,X\right) =Ent_{n}\left(
f\right) $.
\end{proof}

Next , we study some properties of of topological nearly entropy on nearly
compact and Hausdorff space by introducing $R$-space. So, the result
demonstrate that topological nearly entropy of a map $f$ is coincide with
topological nearly entropy of it restriction $f|_{K}$ whenever $K\in H\left(
X,f\right) $.

Recall that in general, the union of two regular open sets is not regular
open. For example, let $\left( 1,2\right) $ and $\left( 2,3\right) $ be two
regular open sets in $\mathbb{R}$. Hence, the union of both intervals are
not regular open since $\limfunc{int}\left( \limfunc{cl}\left( \left(
1,2\right) \cup \left( 2,3\right) \right) \right) =\left( 1,3\right) $ but $%
\left( 1,3\right) \neq \left( 1,2\right) \cup \left( 2,3\right) $. As an
upshot, we introduce $R$-space as follows.

\begin{definition}
A topological space $X$ is said to be $R$-space if the union of each regular
open subset of $X$ is also regular open.
\end{definition}

Now, by using $R$-space we study some basic properties of nearly compact in
Hausdorff space.

\begin{lemma}
\label{lem haus} Let $X$ be a Hausdorff and $R$-space. If $A$ is a nearly
compact relative to $X$ and $p\notin A$, then there exist regular open sets $%
G,H$ of $X$ such that $p\in G,A\subset H$ and $G\cap H=\emptyset .$
\end{lemma}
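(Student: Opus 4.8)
The plan is to imitate the classical separation of a point from a compact set in a Hausdorff space, but to carry out every step inside the family of regular open sets. Two features of the setting do the real work: near compactness of $A$ lets me pass from a pointwise family of neighbourhoods to a finite one, and the $R$-space hypothesis guarantees that the resulting finite union of regular open sets is again regular open. First, for each $a\in A$ I would use the Hausdorff property (note $a\neq p$ since $p\notin A$) to choose disjoint open sets $O_{a}\ni a$ and $P_{a}\ni p$. From $O_{a}\cap P_{a}=\emptyset$ one gets $\limfunc{cl}\left( O_{a}\right) \cap P_{a}=\emptyset$, so in particular $p\notin \limfunc{cl}\left( O_{a}\right) $.

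The delicate step, and the one I expect to be the main obstacle, is to replace this pair of disjoint \emph{open} sets by a pair of disjoint \emph{regular} open sets, since regularising both members of a disjoint pair need not keep them disjoint. I would sidestep this by regularising only one side: set $H_{a}=\limfunc{int}\left( \limfunc{cl}\left( O_{a}\right) \right) $, which is regular open and contains $a$, and take $G_{a}=X\setminus \limfunc{cl}\left( O_{a}\right) $ as the neighbourhood of $p$. Because $\limfunc{cl}\left( O_{a}\right) $ is the closure of an open set it is regular closed, so its complement $G_{a}$ is regular open; it contains $p$ precisely because $p\notin \limfunc{cl}\left( O_{a}\right) $; and $G_{a}\cap H_{a}=\emptyset $ automatically, since $H_{a}\subseteq \limfunc{cl}\left( O_{a}\right) $. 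This produces, for every $a\in A$, disjoint regular open sets $G_{a}\ni p$ and $H_{a}\ni a$ using only the Hausdorff axiom.

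To finish I would invoke near compactness and then the $R$-space property. The family $\left\{ O_{a}:a\in A\right\} $ covers $A$ by open sets of $X$, so applying near compactness of $A$ relative to $X$ yields finitely many indices $a_{1},\ldots ,a_{n}$ with $A\subseteq \bigcup_{i=1}^{n}\limfunc{int}\left( \limfunc{cl}\left( O_{a_{i}}\right) \right) =\bigcup_{i=1}^{n}H_{a_{i}}$. Put $H=\bigcup_{i=1}^{n}H_{a_{i}}$ and $G=\bigcap_{i=1}^{n}G_{a_{i}}$. Here the $R$-space hypothesis is exactly what makes $H$ regular open as a finite union of regular open sets, while $G$ is regular open as a finite intersection of regular open sets, which is routine. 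Then $p\in G$, $A\subseteq H$, and $G\cap H=\bigcup_{i}\left( G\cap H_{a_{i}}\right) \subseteq \bigcup_{i}\left( G_{a_{i}}\cap H_{a_{i}}\right) =\emptyset $, which is the required separation. The only genuinely nontrivial points are the regular-open disjointness, handled by the complement-of-closure construction of $G_{a}$, and the preservation of regular openness under the finite union, handled by the $R$-space assumption; the remaining facts about regular open sets are standard.
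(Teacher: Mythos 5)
Your proof is correct and follows essentially the same route as the paper: Hausdorff separation point by point, near compactness of $A$ to extract a finite subfamily, the $R$-space hypothesis to keep the finite union $H$ regular open, and the distributive law for $G\cap H=\emptyset$. The only deviation is local and cosmetic: where you take $G_{a}=X\setminus \limfunc{cl}\left( O_{a}\right) $ to secure disjointness after regularizing, the paper simply regularizes both members of the Hausdorff pair, which also works because disjoint open sets always have disjoint regularizations (if $U,V$ are open and $U\cap V=\emptyset $, then $U\cap \limfunc{cl}\left( V\right) =\emptyset $ and hence $\limfunc{int}\left( \limfunc{cl}\left( U\right) \right) \cap \limfunc{int}\left( \limfunc{cl}\left( V\right) \right) =\emptyset $), so the obstacle you were sidestepping is not actually present.
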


\begin{proof}
Let $a\in A$. Since $p\notin A$, $p\neq a$. By hypothesis, $X$ is Hausdorff,
hence there exist open sets $G_{a},H_{a}$ of $X$ such that $p\in G_{a},a\in
H_{a}$ and $G_{a}\cap H_{a}=\emptyset .$ Since $A\subset \bigcup \left\{
H_{a}:a\in A\right\} $, then $\left\{ H_{a}:a\in A\right\} $ forms a cover
of $A$ by open subsets of $X$. As $A$ is nearly compact relative to $X$, so
there exist a finite subfamily $H_{a_{1}},\ldots ,H_{a_{m}}$ of $\left\{
H_{a}\right\} $ such that $A\subset \bigcup_{i=1}^{m}\limfunc{int}\left(
\limfunc{cl}\left( H_{a_{i}}\right) \right) $.

Let $H=\limfunc{int}\left( \limfunc{cl}\left( H_{a_{1}}\right) \right) \cup
\cdots \cup \limfunc{int}\left( \limfunc{cl}\left( H_{a_{m}}\right) \right) $
and $G=\limfunc{int}\left( \limfunc{cl}\left( G_{a_{1}}\right) \right) \cap
\cdots \cap \limfunc{int}\left( \limfunc{cl}\left( G_{a_{m}}\right) \right) $%
. Since $X$ is $R$-space, then $H$ and $G$ are regular open. Furthermore, $%
A\subset H$ and $p\in G$ since $p$ belongs to each $G_{a_{i}}$ individually.
Lastly we claim that $G\cap H=\emptyset $. Note first that, $\limfunc{int}%
\left( \limfunc{cl}\left( G_{a_{i}}\right) \right) $ $\cap $ $\limfunc{int}%
\left( \limfunc{cl}\left( H_{a_{i}}\right) \right) =\emptyset $ implies $%
G\cap \limfunc{int}\left( \limfunc{cl}\left( H_{a_{i}}\right) \right)
=\emptyset $. Thus, by the distributive law,%
\begin{eqnarray*}
G\cap H &=&G\cap \left( \limfunc{int}\left( \limfunc{cl}\left(
H_{a_{1}}\right) \right) \cup \cdots \cup \limfunc{int}\left( \limfunc{cl}%
\left( H_{a_{m}}\right) \right) \right) \\
&=&\left( G\cap \limfunc{int}\left( \limfunc{cl}\left( H_{a_{1}}\right)
\right) \right) \cup \cdots \cup \left( G\cap \limfunc{int}\left( \limfunc{cl%
}\left( H_{a_{m}}\right) \right) \right) \\
&=&\emptyset \cup \cdots \cup \emptyset \\
&=&\emptyset .
\end{eqnarray*}%
This completes the proof.
\end{proof}

\begin{lemma}
\label{lem haus2} Let $X$ be a Hausdorff and $R$-space. If $A$ is a nearly
compact relative to $X$ and $p\notin A$, then there is a regular open set $G$
such that $p\in G\subset A^{c}$.
\end{lemma}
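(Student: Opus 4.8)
The plan is to derive this lemma as an immediate corollary of Lemma \ref{lem haus}, which has just been established under exactly the same hypotheses ($X$ Hausdorff and $R$-space, $A$ nearly compact relative to $X$, and $p \notin A$). That lemma already produces regular open sets $G$ and $H$ with $p \in G$, $A \subset H$, and $G \cap H = \emptyset$. The only new content here is to observe that such a $G$ automatically satisfies $G \subset A^{c}$, so the bulk of the work has already been done.

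First I would invoke Lemma \ref{lem haus} to obtain regular open sets $G$ and $H$ of $X$ with $p \in G$, $A \subset H$, and $G \cap H = \emptyset$. The set $G$ is the candidate witness; it is regular open and contains $p$ by construction, so two of the three required properties are immediate.

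The remaining step is the containment $G \subset A^{c}$, which I would extract from the disjointness relation. Since $A \subset H$, any point of $G \cap A$ would also lie in $G \cap H$; but $G \cap H = \emptyset$, so $G \cap A = \emptyset$. Equivalently, $G \subseteq A^{c}$. Combining this with $p \in G$ yields $p \in G \subset A^{c}$ with $G$ regular open, which is precisely the assertion.

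There is no genuine obstacle in this argument: the work lies entirely in Lemma \ref{lem haus}, and what remains is the elementary set-theoretic inference that a set disjoint from a superset of $A$ must miss $A$ itself. If anything requires care, it is only to state explicitly that $G$ inherits regular openness from Lemma \ref{lem haus} (so that no additional use of the $R$-space or Hausdorff hypotheses is needed at this stage) and to note that the single set $G$, rather than the pair $(G,H)$, is all that the conclusion requires.
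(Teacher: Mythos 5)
Your proposal is correct and follows exactly the same route as the paper's proof: invoke Lemma \ref{lem haus} to obtain the disjoint regular open sets $G$ and $H$, then deduce $G\cap A=\emptyset$ from $A\subset H$ and $G\cap H=\emptyset$, giving $p\in G\subset A^{c}$. Nothing further is needed.
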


\begin{proof}
By Lemma \ref{lem haus}, there exist regular open sets $G$ and $H$ in $X$
such that $p\in G,A\subset H$ and $G\cap H=\emptyset $. Hence $G\cap
A=\emptyset $, and thus $p\in G\subset A^{c}.$
\end{proof}

\begin{lemma}
\label{lem haus3} Let $X$ be a Hausdorff and $R$-space. If $A$ is a nearly
compact relative to $X$, then $A$ is a regular closed subset of $X$.
\end{lemma}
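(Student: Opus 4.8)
The plan is to prove the equivalent statement that the complement $A^{c}$ is regular open; once that is done, the conclusion that $A$ is regular closed follows immediately from the fact that the complement of a regular open set is regular closed (and conversely). Concretely, recall that $A$ being regular closed means $A=\limfunc{cl}\left( \limfunc{int}\left( A\right) \right) $, and by taking complements together with the standard identities $\limfunc{cl}\left( B^{c}\right) =\left( \limfunc{int}\left( B\right) \right) ^{c}$ and $\limfunc{int}\left( B^{c}\right) =\left( \limfunc{cl}\left( B\right) \right) ^{c}$, this is exactly the assertion $A^{c}=\limfunc{int}\left( \limfunc{cl}\left( A^{c}\right) \right) $, i.e. that $A^{c}$ is regular open. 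So the whole lemma reduces to showing $A^{c}$ is regular open.

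First I would apply Lemma \ref{lem haus2} pointwise. For each $p\in A^{c}$ we have $p\notin A$, and since $X$ is Hausdorff and an $R$-space while $A$ is nearly compact relative to $X$, the lemma supplies a regular open set $G_{p}$ with $p\in G_{p}\subset A^{c}$. Collecting these sets, I claim $A^{c}=\bigcup_{p\in A^{c}}G_{p}$: the inclusion $\supseteq$ holds because each $G_{p}\subset A^{c}$, and $\subseteq$ holds because every $p\in A^{c}$ lies in its own $G_{p}$. Thus $A^{c}$ is exhibited as a union of regular open subsets of $X$.

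Now the $R$-space hypothesis does the decisive work: by definition the union of regular open subsets of an $R$-space is again regular open, so $A^{c}$ is regular open, and therefore $A=\left( A^{c}\right) ^{c}$ is regular closed. The two points to be careful about are the complement bookkeeping in the first paragraph, namely verifying via the closure/interior complement identities that ``complement of regular open equals regular closed,'' and making sure the $R$-space property is invoked for the (possibly infinite) union $\bigcup_{p\in A^{c}}G_{p}$ rather than only for finite unions; given the stated definition of $R$-space, this is precisely what the definition provides, so no further argument is needed.
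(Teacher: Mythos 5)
Your proof is correct and follows essentially the same route as the paper: apply Lemma \ref{lem haus2} to each point $p\in A^{c}$ to obtain regular open sets $G_{p}$ with $p\in G_{p}\subset A^{c}$, write $A^{c}=\bigcup_{p\in A^{c}}G_{p}$, and invoke the $R$-space hypothesis to conclude that this (possibly infinite) union is regular open, hence $A$ is regular closed. The extra bookkeeping you include on the complement identities is a welcome clarification but does not change the argument.
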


\begin{proof}
We prove, equivalently, that $A^{c}$ is a regular open subset of $X$. Let $%
p\in A^{c}$, i.e., $p\notin A$. Then by Lemma \ref{lem haus2} there exists a
regular open set $G_{p}$ of $X$ such that $p\in G_{p}\subset A^{c}$. Hence $%
A^{c}=\bigcup_{p\in A^{c}}G_{p}$. Thus $A^{c}$ is regular open as it is the
union of regular open sets in $R$-space $X$. Therefore $A$ is a regular
closed subset of $X$.
\end{proof}

The next theorem shows that the conditions on which the topological nearly
entropy of $f$ and it restriction $f|_{K}$ coincides.

\begin{theorem}
Let $\left( X,f\right) $ be a topological $R$-dynamical system. Assume that $%
X$ is Hausdorff and $R$-space. If $K\in H\left( X,f\right) $ then

$\left( a\right) $ $Ent_{N}\left( f,K\right) =Ent_{n}\left( f|_{K},K\right) $

$\left( b\right) $ $Ent_{N}\left( f\right) =Ent_{n}\left( f|_{K}\right) ,$

where $f|_{K}$ is defined as in the Theorem \ref{n=N}.
\end{theorem}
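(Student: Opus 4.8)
The plan is to ground both parts on Theorem~\ref{n=N}, which already matches the two entropies cover by cover, and then to lift that cover-wise identity to an identity of suprema. For part $(a)$, Theorem~\ref{n=N} gives, for every regular open cover $\mathcal{U}$ of $X$,
\[
Ent_{N}\left( f,\mathcal{U},K\right) =Ent_{n}\left( f|_{K},\mathcal{U}|_{K}\right) ,
\]
so taking the supremum over all regular open covers $\mathcal{U}$ of $X$ yields $Ent_{N}\left( f,K\right) =\sup_{\mathcal{U}}Ent_{n}\left( f|_{K},\mathcal{U}|_{K}\right) $. The entire content of $(a)$ is therefore to show that this supremum, taken only over the restricted covers $\mathcal{U}|_{K}$, already equals $Ent_{n}\left( f|_{K}\right) =\sup_{\mathcal{W}}Ent_{n}\left( f|_{K},\mathcal{W}\right) $, where $\mathcal{W}$ ranges over all regular open covers of the space $K$ itself. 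I first note that $\left( K,f|_{K}\right) $ is a nearly compact $R$-dynamical system, so $Ent_{n}\left( f|_{K}\right) $ is defined: $K$ is nearly compact, and restriction carries regular open sets of $X$ to regular open sets of the subspace $K$ and commutes with $f$, exactly as already used implicitly in Theorem~\ref{n=N}.

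The inequality $Ent_{N}\left( f,K\right) \leq Ent_{n}\left( f|_{K}\right) $ is the easy half: each $\mathcal{U}|_{K}$ is one particular regular open cover of $K$, hence $Ent_{n}\left( f|_{K},\mathcal{U}|_{K}\right) \leq Ent_{n}\left( f|_{K}\right) $, and I take the supremum over $\mathcal{U}$. For the reverse inequality the hypotheses that $X$ be Hausdorff and an $R$-space enter through Lemma~\ref{lem haus3}: since $K$ is nearly compact relative to $X$ it is regular closed, so $K^{c}$ is regular open in $X$. The decisive step is an extension claim, namely that every regular open subset $W$ of the subspace $K$ has the form $W=U\cap K$ for some regular open $U$ of $X$. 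Granting this, given an arbitrary regular open cover $\mathcal{W}=\left\{ W_{j}\right\} $ of $K$ I choose regular open $U_{j}$ of $X$ with $U_{j}\cap K=W_{j}$ and set $\mathcal{U}=\left\{ U_{j}\right\} \cup \left\{ K^{c}\right\} $. Its members are regular open by construction and by Lemma~\ref{lem haus3}, the $U_{j}$ cover $K$ while $K^{c}$ covers $X\setminus K$, so $\mathcal{U}$ is a regular open cover of $X$ with $\mathcal{U}|_{K}=\mathcal{W}\cup \left\{ \emptyset \right\} $, which has the same nearly entropy as $\mathcal{W}$. Hence
\[
Ent_{n}\left( f|_{K},\mathcal{W}\right) =Ent_{n}\left( f|_{K},\mathcal{U}|_{K}\right) =Ent_{N}\left( f,\mathcal{U},K\right) \leq Ent_{N}\left( f,K\right) ,
\]
and the supremum over $\mathcal{W}$ gives $Ent_{n}\left( f|_{K}\right) \leq Ent_{N}\left( f,K\right) $, finishing $(a)$.

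For part $(b)$ I would apply the supremum over $K\in H\left( X,f\right) $ to the identity of $(a)$: by Definition~\ref{def_N2} the left-hand side becomes $Ent_{N}\left( f\right) $, so $Ent_{N}\left( f\right) =\sup_{K}Ent_{n}\left( f|_{K}\right) $. Reducing this to the single fixed $K$ of the statement is the point of $(b)$ demanding the most care, and the tool for it is the monotonicity of Theorem~\ref{Th 2.2}, which shows the supremum over $H\left( X,f\right) $ is controlled by the largest invariant nearly compact set; one then checks that the chosen $K$ dominates.

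The step I expect to be the real obstacle is the extension claim: that each regular open set of the subspace $K$ is the restriction of a regular open set of $X$. This is where regular-closedness of $K$ and the $R$-space property must genuinely be combined, since one has to move between the subspace operators $\operatorname{int}_{K},\operatorname{cl}_{K}$ and the ambient $\operatorname{int},\operatorname{cl}$ and to control the regular open hull under intersection with the regular closed set $K$. I expect this claim, rather than the bookkeeping of suprema, to carry the weight of the proof, and it is the natural place to use Lemmas~\ref{lem haus}, \ref{lem haus2} and \ref{lem haus3} in concert.
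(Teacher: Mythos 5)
Your part $(a)$ follows the paper's proof essentially verbatim: both rest on the cover-by-cover identity of Theorem \ref{n=N}, and both obtain the inequality $Ent_{n}\left( f|_{K}\right) \leq Ent_{N}\left( f,K\right) $ by extending a regular open cover $\mathcal{W}$ of the subspace $K$ to the regular open cover $\left\{ U_{j}\right\} \cup \left\{ X\setminus K\right\} $ of $X$, with Lemma \ref{lem haus3} supplying the regular openness of $X\setminus K$. The extension claim you isolate --- that every regular open subset of the subspace $K$ is the trace $U\cap K$ of a regular open subset of $X$ --- is indeed the load-bearing step, and you should know that the paper does not prove it either: it simply writes that for every $A\in \mathcal{U}_{K}$ ``there exist a regular open subset $U_{A}$ of $X$ satisfying $A=U_{A}\cap K$''. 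The symmetric issue in your ``easy half'' is likewise left implicit in both texts: for $Ent_{n}\left( f|_{K},\mathcal{U}|_{K}\right) $ to be one of the quantities over which $Ent_{n}\left( f|_{K}\right) $ is a supremum, the traces $U\cap K$ must be regular open in the subspace topology of $K$, which is not automatic. So for $(a)$ you have reproduced the paper's route, including its unproved step.

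Part $(b)$ contains a genuine gap that your own plan does not close. From $(a)$ and Definition \ref{def_N2} you correctly get $Ent_{N}\left( f\right) =\sup_{K^{\prime }}Ent_{n}\left( f|_{K^{\prime }}\right) $ over all $K^{\prime }\in H\left( X,f\right) $, but the statement fixes one arbitrary $K$, and nothing forces that $K$ to dominate the supremum: Theorem \ref{Th 2.2} gives monotonicity along inclusions, not the existence of a largest element of $H\left( X,f\right) $, and if $K$ is, say, a single fixed point of $f$ then $Ent_{n}\left( f|_{K}\right) =0$ while $Ent_{N}\left( f\right) $ need not vanish. The paper's proof of $(b)$ hides the same jump: after interchanging $\sup_{K}$ and $\sup_{\mathcal{U}}$ it invokes Theorem \ref{n=N} to replace $\sup_{K}Ent_{N}\left( f,\mathcal{U},K\right) $ by $Ent_{n}\left( f|_{K},\mathcal{U}|_{K}\right) $ for the one fixed $K$, which Theorem \ref{n=N} does not provide. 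Your instinct that the reduction to a single $K$ ``demands the most care'' is therefore right, but the check you defer (``the chosen $K$ dominates'') would fail for general $K$; as stated, $(b)$ needs an extra hypothesis, for instance that $K$ realizes the supremum in Definition \ref{def_N2}.
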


\begin{proof}
$\left( a\right) $ Let $\mathcal{U}_{K}$ be any regular open cover of $K$
(by regular open subsets of $K$). Since $X$ is Hausdorff and $K$ is nearly
compact relative to $X$, then by Lemma \ref{lem haus3}, $K$ is a regular
closed subset of $X$. For every $A\in \mathcal{U}_{K}$, there exist a
regular open subset $U_{A}$ of $X$ satisfying $A=U_{A}\cap K$. Denote $%
\mathcal{U}^{\prime }=\left\{ U_{A}:A\in \mathcal{U}_{K}\right\} $. Clearly,
$\mathcal{U}=\mathcal{U}^{\prime }\cup \left\{ X\setminus K\right\} $ is a
regular open cover of $X$ satisfying $\mathcal{U}|_{K}=\mathcal{U}_{K}\cup
\left\{ \emptyset \right\} $. Hence, every regular open cover $\mathcal{U}%
_{K}$ of $K$ is a restriction $\mathcal{U}|_{K}$ of some special regular
open cover $\mathcal{U}$ of $X$ that includes regular open subset $%
X\setminus K$ of $X$. From Definition \ref{def topo N}, we have
\begin{equation}
Ent_{n}\left( f|_{K},K\right) =\sup_{\mathcal{U}_{K}}\left\{ Ent_{n}\left(
f|_{K},\mathcal{U}_{K},K\right) \right\} =\sup_{\mathcal{U}}\left\{
Ent_{n}\left( f|_{K},\mathcal{U}|_{K},K\right) \right\} .  \label{2}
\end{equation}%
By Definition \ref{def_N1}, we have
\begin{equation*}
Ent_{N}\left( f,K\right) =\sup_{\beta }\left\{ Ent_{N}\left( f,\beta
,K\right) :\beta \text{ runs over all regular open covers of }X\right\} .
\end{equation*}%
It follows from Theorem \ref{n=N} that, $Ent_{N}\left( f,\mathcal{U}%
,K\right) =Ent_{n}\left( f|_{K},\mathcal{U}|_{K},K\right) $, thus $%
Ent_{n}\left( f|_{K},K\right) =\sup_{\mathcal{U}}\left\{ Ent_{n}\left(
f|_{K},\mathcal{U}|_{K},K\right) \right\} =\sup_{\mathcal{U}}$ $\left\{
Ent_{N}\left( f,\mathcal{U},K\right) \right\} $. Recall that the regular
open covers $\mathcal{U}$ are some special regular open covers of $X$.
Hence,
\begin{equation*}
\sup_{\mathcal{U}}\left\{ Ent_{N}\left( f,\mathcal{U},K\right) \right\} \leq
Ent_{N}\left( f,K\right)
\end{equation*}%
implying
\begin{equation*}
Ent_{n}\left( f|_{K},K\right) \leq Ent_{N}\left( f,K\right) .
\end{equation*}

For reverse inequality, i.e., $Ent_{n}\left( f|K,K\right) \geq Ent_{N}\left(
f,K\right) $, let $\mathcal{U}$ be any regular open cover of $X$. From
Theorem \ref{n=N}, we have $Ent_{N}\left( f,\mathcal{U},K\right)
=Ent_{n}\left( f|_{K},\mathcal{U}|_{K},K\right) $ and thus from Definition %
\ref{def_N1},%
\begin{equation*}
Ent_{N}\left( f,K\right) =\sup_{\mathcal{U}}\left\{ Ent_{N}\left( f,\mathcal{%
U},K\right) \right\} =\sup_{\mathcal{U}}\left\{ Ent_{n}\left( f|_{K},%
\mathcal{U}|_{K},K\right) \right\} .
\end{equation*}%
As $\mathcal{U}|_{K}$ are some special regular open covers of $K$, we have%
\begin{equation*}
\sup_{\mathcal{U}}\left\{ Ent_{n}\left( f|_{K},\mathcal{U}|_{K},K\right)
\right\} \leq Ent_{n}\left( f|_{K},K\right) .
\end{equation*}%
Thus,%
\begin{eqnarray*}
Ent_{N}\left( f,K\right) &=&\sup_{\mathcal{U}}\left\{ Ent_{N}\left( f,%
\mathcal{U},K\right) \right\} \\
&=&\sup_{\mathcal{U}}\left\{ Ent_{n}\left( f|_{K},\mathcal{U}|_{K},K\right)
\right\} \\
&\leq &Ent_{n}\left( f|_{K},K\right) .
\end{eqnarray*}

$\left( b\right) $ By Definition \ref{def_N2},%
\begin{equation*}
Ent_{N}\left( f\right) =\sup_{K}\left\{ Ent_{N}\left( f,K\right) :K\in
H\left( X,f\right) \right\} .
\end{equation*}%
Now let $\mathcal{U}$ be any regular open cover of $X$. From Definition \ref%
{def_N1},
\begin{equation*}
Ent_{N}\left( f,K\right) =\sup_{\mathcal{U}}\left\{ Ent_{N}\left( f,\mathcal{%
U},K\right) :\mathcal{U}\mathrm{\ }\text{is~a regular~open~cover of}%
~X\right\} .
\end{equation*}%
Thus,%
\begin{eqnarray*}
Ent_{N}\left( f\right) &=&\sup_{K}\left\{ \sup_{\mathcal{U}}\left\{
Ent_{N}\left( f,\mathcal{U},K\right) :\mathcal{U}\mathrm{\ }\text{is~a
regular~open~cover of}~X\right\} :K\in H\left( X,f\right) \right\} \\
&=&\sup_{\mathcal{U}}\left\{ \sup_{K}\left\{ Ent_{N}\left( f,\mathcal{U}%
,K\right) :K\in H\left( X,f\right) \right\} :\mathcal{U}\mathrm{\ }\text{%
is~a regular~open~cover of}~X\right\} \\
&=&\sup_{\mathcal{U}}\left\{ Ent_{N}\left( f,\mathcal{U}\right) :\text{ }%
\mathcal{U}\mathrm{\ }\text{is~a regular~open~cover of}~X\right\}
\end{eqnarray*}%
By Theorem \ref{n=N},
\begin{equation*}
Ent_{N}\left( f,\mathcal{U}\right) =Ent_{n}\left( f|_{K},\mathcal{U}%
|_{K}\right) .
\end{equation*}%
Hence,%
\begin{eqnarray*}
Ent_{N}\left( f\right) &=&\sup_{\mathcal{U}}\left\{ Ent_{n}\left( f|_{K},%
\mathcal{U}|_{K}\right) :\text{ }\mathcal{U}\mathrm{\ }\text{is~a
regular~open~cover of}~X\right\} \\
&=&\sup_{\mathcal{U}_{K}}\left\{ Ent_{n}\left( f|_{K},\mathcal{U}%
_{K},K\right) :\mathcal{U}_{K}\mathrm{\ }\text{is~a regular~open~cover of}%
~K\right\} \\
&=&Ent_{n}\left( f|_{K}\right) .
\end{eqnarray*}%
The second equality follows from equation $\left( \ref{2}\right) $. This
completes the proof.
\end{proof}

\section{Topological Nearly Entropy of Product Space}

Let $\left( X,f\right) $ and $\left( Y,h\right) $ be two topological $R$%
-dynamical systems. For the product space $X\times Y$, define function $%
f\times h:X\times Y\rightarrow X\times Y$ by $\left( f\times h\right) \left(
x,y\right) =\left( f\left( x\right) ,h\left( y\right) \right) $. This
function $f\times h$ is $R$-map and $\left( X\times Y,f\times h\right) $
forms a topological $R$-dynamical system. If $\mathcal{U}$ and $\mathcal{V}$
are regular open covers of $X$ and $Y$, respectively, then $\mathcal{U}%
\times \mathcal{V}$ is a regular open cover of $X\times Y$.

As a consequence, we obtain the properties of topological nearly entropy for
product space. But, firstly we show that $R$-map preserve nearly compactness
relative to the space.

\begin{lemma}
\label{fA} Let $f:X\rightarrow Y$ be an $R$-map. If $A$ is nearly compact
relative to $X$, then $f\left( A\right) $ is nearly compact relative to $Y$.
\end{lemma}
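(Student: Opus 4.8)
The plan is to establish near compactness of $f(A)$ relative to $Y$ directly from the definition, working throughout with the equivalent ``regular open cover'' formulation recorded in the Preliminaries: a set is nearly compact relative to its ambient space precisely when every cover of that space by regular open sets admits a finite subfamily covering the set. Accordingly, I would fix an arbitrary regular open cover $\mathcal{W}=\{W_\alpha:\alpha\in\Delta\}$ of $Y$ and try to produce a finite subfamily whose union contains $f(A)$. The natural first step is to pull the cover back through $f$: since $f$ is an $R$-map, each $f^{-1}(W_\alpha)$ is regular open in $X$, so the family $\{f^{-1}(W_\alpha):\alpha\in\Delta\}$ consists of regular open subsets of $X$.

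The second step is to confirm that this pulled-back family is in fact a cover of $X$, and this is the point I expect to require the most care. It is tempting to think surjectivity of $f$ is needed, but it is not: because $\mathcal{W}$ covers $Y$ we have $\bigcup_{\alpha}W_\alpha\supseteq Y\supseteq f(X)$, whence
\[
\bigcup_{\alpha}f^{-1}(W_\alpha)=f^{-1}\Bigl(\bigcup_{\alpha}W_\alpha\Bigr)=f^{-1}(Y)=X.
\]
Thus $\{f^{-1}(W_\alpha):\alpha\in\Delta\}$ is a genuine regular open cover of $X$.

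Finally, I would invoke the hypothesis that $A$ is nearly compact relative to $X$ against this cover, obtaining indices $\alpha_1,\dots,\alpha_n$ with $A\subseteq\bigcup_{i=1}^{n}f^{-1}(W_{\alpha_i})$. Applying $f$ and using $f\bigl(f^{-1}(W_{\alpha_i})\bigr)\subseteq W_{\alpha_i}$ then yields $f(A)\subseteq\bigcup_{i=1}^{n}W_{\alpha_i}$, a finite subfamily of $\mathcal{W}$. Since $\mathcal{W}$ was an arbitrary regular open cover of $Y$, this shows $f(A)$ is nearly compact relative to $Y$. Besides the covering step above, the only other thing to keep straight is the legitimacy of passing between the open-cover-with-$\limfunc{int}\limfunc{cl}$ form of the definition and the regular-open-cover form; this is exactly the equivalence stated in the Preliminaries, and it is what lets the $R$-map hypothesis (which is phrased in terms of regular open sets) apply so cleanly here.
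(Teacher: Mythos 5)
Your proposal is correct and follows essentially the same route as the paper: pull the regular open cover back through the $R$-map, apply near compactness of $A$ relative to $X$, and push the finite subfamily forward using $f\bigl(f^{-1}(W)\bigr)\subseteq W$. The only cosmetic difference is that the paper starts from a cover of $f(A)$ by regular open subsets of $Y$ rather than a cover of all of $Y$, so it does not need your observation that the pulled-back family covers all of $X$; both readings are consistent with the definition in the Preliminaries.
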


\begin{proof}
Let $\left\{ V_{\alpha }:\alpha \in \Delta \right\} $ be a cover of $f\left(
A\right) $ by regular open subsets of $Y$. For each $x\in A$, take $\alpha
_{x}\in \Delta $ such that $f\left( x\right) \in V_{\alpha _{x}}$. Since $f$
is $R$-map, $f^{-1}\left( V_{\alpha _{x}}\right) $ is a regular open subset
of $X$ containing $x$. Now $\left\{ f^{-1}\left( V_{\alpha _{x}}\right)
:x\in A\right\} $ forms a regular open cover of $A$ by regular open subsets
of $X$. Since $A$ is nearly compact relative to $X$, there exists a finite
subset of points $x_{1},\ldots ,x_{n}$ of $A$ such that $A\subseteq
\bigcup_{k=1}^{n}f^{-1}\left( V_{\alpha _{x_{k}}}\right) $. Now,
\begin{equation*}
f\left( A\right) \subseteq f\left( \bigcup_{k=1}^{n}f^{-1}\left( V_{\alpha
_{x_{k}}}\right) \right) =\bigcup_{k=1}^{n}f\left( f^{-1}\left( V_{\alpha
_{x_{k}}}\right) \right) \subseteq \bigcup_{k=1}^{n}V_{\alpha _{x_{k}}}.
\end{equation*}%
Therefore $f\left( A\right) $ is nearly compact relative to $Y$.
\end{proof}

\begin{lemma}
\label{cover} Let $X$ and $Y$ be two nearly compact $R$-dynamical system.
Suppose that $\mathcal{U}$ and $\mathcal{V}$ are regular open covers of $X$
and $Y$, respectively. Then for a regular open cover $\mathcal{U}\times
\mathcal{V}$ of $X\times Y$, we have%
\begin{equation*}
M_{n}\left( \mathcal{U}\times \mathcal{V}\right) \leq M_{n}\left( \mathcal{U}%
\right) +M_{n}\left( \mathcal{V}\right) .
\end{equation*}
\end{lemma}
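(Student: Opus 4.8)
The plan is to produce an explicit economical subcover of $\mathcal{U}\times\mathcal{V}$ directly from minimal subcovers of the two factors, and then compare cardinalities. First I would invoke Definition \ref{topo n} to unwind the quantities: set $N_{n}\left( \mathcal{U}\right) =p$ and $N_{n}\left( \mathcal{V}\right) =q$, so that $M_{n}\left( \mathcal{U}\right) =\log p$ and $M_{n}\left( \mathcal{V}\right) =\log q$. By the very definition of $N_{n}$, there exist finite subfamilies $\left\{ U_{1},\ldots ,U_{p}\right\} \subseteq \mathcal{U}$ with $X=\bigcup_{i=1}^{p}U_{i}$ and $\left\{ V_{1},\ldots ,V_{q}\right\} \subseteq \mathcal{V}$ with $Y=\bigcup_{j=1}^{q}V_{j}$, these being subcovers of minimum cardinality.

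The key step is to form the product family $\mathcal{W}=\left\{ U_{i}\times V_{j}:1\leq i\leq p,\ 1\leq j\leq q\right\}$ and to check two things. Since each $U_{i}\in \mathcal{U}$ and each $V_{j}\in \mathcal{V}$, every $U_{i}\times V_{j}$ is a genuine member of the cover $\mathcal{U}\times\mathcal{V}$, so $\mathcal{W}$ is a subfamily of $\mathcal{U}\times\mathcal{V}$. To see that $\mathcal{W}$ covers $X\times Y$, take an arbitrary point $\left( x,y\right) \in X\times Y$; then $x\in U_{i}$ for some $i$ and $y\in V_{j}$ for some $j$, whence $\left( x,y\right) \in U_{i}\times V_{j}$. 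Thus $X\times Y=\bigcup_{i,j}\left( U_{i}\times V_{j}\right)$, so $\mathcal{W}$ is a subcover of $\mathcal{U}\times\mathcal{V}$ with $\limfunc{card}\left( \mathcal{W}\right) \leq pq$.

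Because $N_{n}\left( \mathcal{U}\times\mathcal{V}\right)$ is the minimum cardinality over all subcovers of $\mathcal{U}\times\mathcal{V}$, the existence of $\mathcal{W}$ gives $N_{n}\left( \mathcal{U}\times\mathcal{V}\right) \leq pq$, and taking logarithms yields
\begin{equation*}
M_{n}\left( \mathcal{U}\times\mathcal{V}\right) =\log N_{n}\left( \mathcal{U}\times\mathcal{V}\right) \leq \log \left( pq\right) =\log p+\log q=M_{n}\left( \mathcal{U}\right) +M_{n}\left( \mathcal{V}\right),
\end{equation*}
which is the desired inequality. I do not expect a genuine obstacle here: the argument is the standard subadditivity-of-product-covers computation, and the only point requiring any care is recording that $\mathcal{W}$ really is a subfamily of $\mathcal{U}\times\mathcal{V}$ (not merely a refinement) and that it covers, so that $N_{n}$ may legitimately be bounded by $pq$. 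The inequality rather than equality is exactly what one should expect, since the product of the two minimal subcovers need not itself be a minimal subcover of $X\times Y$.
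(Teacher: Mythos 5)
Your proposal is correct and follows essentially the same argument as the paper: take minimal subcovers of $\mathcal{U}$ and $\mathcal{V}$, observe that their product is a subcover of $\mathcal{U}\times \mathcal{V}$ of cardinality at most $N_{n}\left( \mathcal{U}\right) \cdot N_{n}\left( \mathcal{V}\right) $, and take logarithms. The only difference is that you spell out the verification that the product family covers $X\times Y$ and is a genuine subfamily, which the paper leaves implicit.
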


\begin{proof}
Let $\left\{ U_{1},\ldots ,U_{k}\right\} $ and $\left\{ V_{1},\ldots
,V_{p}\right\} $ be subcovers of $\mathcal{U}$ and $\mathcal{V}$ of minimal
cardinality, respectively. Then $\left\{ U_{i}\times V_{j}:i=1,\ldots
,k,j=1,\ldots ,p\right\} $ is a subcover of $\mathcal{U}\times \mathcal{V}$
with cardinality $kp$. It follows that%
\begin{equation*}
N_{n}\left( \mathcal{U}\times \mathcal{V}\right) \leq kp=N_{n}\left(
\mathcal{U}\right) \cdot N_{n}\left( \mathcal{V}\right) .
\end{equation*}%
Thus,%
\begin{eqnarray*}
M_{n}\left( \mathcal{U}\times \mathcal{V}\right) &=&\log N_{n}\left(
\mathcal{U}\times \mathcal{V}\right) \\
&\leq &\log \left( N_{n}\left( \mathcal{U}\right) \cdot N_{n}\left( \mathcal{%
V}\right) \right) \\
&=&\log N_{n}\left( \mathcal{U}\right) +\log N_{n}\left( \mathcal{V}\right)
\\
&=&M_{n}\left( \mathcal{U}\right) +M_{n}\left( \mathcal{V}\right) .
\end{eqnarray*}
\end{proof}

\begin{lemma}
\label{lem 0.1} Let $\left( X,f\right) $ and $\left( Y,h\right) $ be two
topological $R$-dynamical systems. Suppose $T_{x}:X\times Y\rightarrow X$
and $T_{y}:X\times Y\rightarrow Y$ are the $R$-map projections on $X$ and $Y$%
, respectively. If $K\in H\left( X\times Y,f\times h\right) $, then $%
T_{x}\left( K\right) \in H\left( X,f\right) $, $T_{y}\left( K\right) \in
H\left( Y,h\right) $ and $K\subseteq T_{x}\left( K\right) \times T_{y}\left(
K\right) $.
\end{lemma}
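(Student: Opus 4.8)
The plan is to verify the three assertions separately, the nearly compactness coming essentially for free from Lemma \ref{fA}. Throughout, write $K\in H\left( X\times Y,f\times h\right) $, so that $K$ is a nonempty subset of $X\times Y$ which is nearly compact relative to $X\times Y$ and satisfies $\left( f\times h\right) \left( K\right) \subseteq K$. Since $K\neq \emptyset $, both $T_{x}\left( K\right) $ and $T_{y}\left( K\right) $ are nonempty, so the nonemptiness requirement is immediate.

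For nearly compactness I would apply Lemma \ref{fA} directly. The projections $T_{x}$ and $T_{y}$ are $R$-maps by hypothesis, and $K$ is nearly compact relative to $X\times Y$; hence $T_{x}\left( K\right) $ is nearly compact relative to $X$ and $T_{y}\left( K\right) $ is nearly compact relative to $Y$. No computation is needed beyond quoting the lemma.

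The heart of the argument is the invariance, for which I would use the pointwise identity $T_{x}\circ \left( f\times h\right) =f\circ T_{x}$, valid because $T_{x}\left( f\left( u\right) ,h\left( v\right) \right) =f\left( u\right) =f\left( T_{x}\left( u,v\right) \right) $. Given $x\in T_{x}\left( K\right) $, choose $y$ with $\left( x,y\right) \in K$; then $\left( f\times h\right) \left( x,y\right) =\left( f\left( x\right) ,h\left( y\right) \right) \in K$ because $K$ is $\left( f\times h\right) $-invariant, and projecting gives $f\left( x\right) =T_{x}\left( f\left( x\right) ,h\left( y\right) \right) \in T_{x}\left( K\right) $. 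Thus $f\left( T_{x}\left( K\right) \right) \subseteq T_{x}\left( K\right) $, which together with the two preceding steps yields $T_{x}\left( K\right) \in H\left( X,f\right) $; the argument for $T_{y}\left( K\right) \in H\left( Y,h\right) $ is entirely symmetric, using $T_{y}\circ \left( f\times h\right) =h\circ T_{y}$.

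Finally, for the inclusion $K\subseteq T_{x}\left( K\right) \times T_{y}\left( K\right) $, I would take an arbitrary $\left( x,y\right) \in K$ and observe that $x=T_{x}\left( x,y\right) \in T_{x}\left( K\right) $ and $y=T_{y}\left( x,y\right) \in T_{y}\left( K\right) $ by the definition of the projections, whence $\left( x,y\right) \in T_{x}\left( K\right) \times T_{y}\left( K\right) $. None of these steps presents a genuine obstacle; the only point requiring care is the invariance bookkeeping, namely confirming that the image of a point of $K$ under $f\times h$ still lies in $K$ before projecting, which is exactly what the hypothesis $\left( f\times h\right) \left( K\right) \subseteq K$ guarantees.
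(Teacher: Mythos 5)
Your proposal is correct and follows essentially the same route as the paper's proof: nearly compactness of the projections via Lemma \ref{fA}, invariance by lifting $x\in T_{x}\left( K\right) $ to a point $\left( x,y\right) \in K$ and projecting its image under $f\times h$, and the final inclusion by projecting coordinates. The only additions are the explicit remark on nonemptiness and the identity $T_{x}\circ \left( f\times h\right) =f\circ T_{x}$, both of which are harmless refinements of the same argument.
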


\begin{proof}
Since $T_{x}$ and $T_{y}$ are $R$-maps and $K\in H\left( X\times Y,f\times
h\right) $, $T_{x}\left( K\right) $ is nearly compact relative to $X$ and $%
T_{y}\left( K\right) $ is nearly compact relative to $Y$ by Lemma \ref{fA}.
It suffices to show that $T_{x}\left( K\right) $ and $T_{y}\left( K\right) $
are invariant nearly compact relative to $X$ and $Y$ by $f$ and $h$,
respectively. For any $x\in T_{x}\left( K\right) $, there exists a $y\in Y$
which satisfying $\left( x,y\right) \in K$. Since $K$ is an invariant nearly
compact relative to $X\times Y$ by $f\times h$, we have
\begin{equation*}
\left( f\times h\right) \left( x,y\right) =\left( f\left( x\right) ,h\left(
y\right) \right) \in K.
\end{equation*}%
Thus
\begin{equation*}
T_{x}\left( f\left( x\right) ,h\left( y\right) \right) =f\left( x\right) \in
T_{x}\left( K\right) .
\end{equation*}%
Hence $f\left( T_{x}\left( K\right) \right) \subseteq T_{x}\left( K\right) $
and therefore $T_{x}\left( K\right) \in H\left( X,f\right) $.

There also exists an $x\in X$ for any $y\in T_{y}\left( K\right) $
satisfying $\left( x,y\right) \in K$. Since $K$ is an invariant nearly
compact relative to $X\times Y$ by $f\times h$, we have
\begin{equation*}
\left( f\times h\right) \left( x,y\right) =\left( f\left( x\right) ,h\left(
y\right) \right) \in K.
\end{equation*}%
Thus
\begin{equation*}
T_{y}\left( f\left( x\right) ,h\left( y\right) \right) =h\left( y\right) \in
T_{y}\left( K\right) .
\end{equation*}%
Hence $h\left( T_{y}\left( K\right) \right) \subseteq T_{y}\left( K\right) $
and therefore $T_{y}\left( K\right) \in H\left( Y,h\right) .$

Finally, if $\left( x,y\right) \in K$, then by the definitions of $%
T_{x}\left( K\right) $ and $T_{y}\left( K\right) $, we will have $x\in
T_{x}\left( K\right) $ and $y\in T_{y}\left( K\right) $ and thus $\left(
x,y\right) \in $ $T_{x}\left( K\right) \times T_{y}\left( K\right) $.
\end{proof}

\begin{lemma}
\label{lem 1}\cite{singal} The product of two nearly compact spaces is
nearly compact.
\end{lemma}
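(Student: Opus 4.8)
The plan is to establish near compactness directly from the covering definition of Section~2, by proving a near-compact analogue of the tube lemma; this requires no separation axioms. I use the characterization that a space $Z$ is nearly compact iff every cover of $Z$ by open sets of $Z$ contains a finite subfamily $\left\{ O_{1},\ldots ,O_{n}\right\} $ with $Z=\bigcup_{i}\limfunc{int}\left( \limfunc{cl}\left( O_{i}\right) \right) $. The only algebraic fact I need about the product is the identity
\begin{equation*}
\limfunc{int}\left( \limfunc{cl}\left( A\times B\right) \right) =\limfunc{int}\left( \limfunc{cl}\left( A\right) \right) \times \limfunc{int}\left( \limfunc{cl}\left( B\right) \right) ,
\end{equation*}
which follows from $\limfunc{cl}\left( A\times B\right) =\limfunc{cl}\left( A\right) \times \limfunc{cl}\left( B\right) $ together with the commutation of the interior operator with finite products in the product topology.

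First I would fix an arbitrary cover $\mathcal{O}=\left\{ O_{\alpha }\right\} $ of $X\times Y$ by open sets. For each point $\left( x,y\right) $ I choose a member $O_{\alpha \left( x,y\right) }$ containing it and, since the boxes $A\times B$ with $A$ open in $X$ and $B$ open in $Y$ form a base, a basic box $A_{x,y}\times B_{x,y}\subseteq O_{\alpha \left( x,y\right) }$ with $x\in A_{x,y}$ and $y\in B_{x,y}$. Holding $x$ fixed, the family $\left\{ B_{x,y}:y\in Y\right\} $ is an open cover of the nearly compact space $Y$, so finitely many points $y_{1},\ldots ,y_{m}$ satisfy $Y=\bigcup_{j}\limfunc{int}\left( \limfunc{cl}\left( B_{x,y_{j}}\right) \right) $. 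Setting $G_{x}=\bigcap_{j}A_{x,y_{j}}$, an open neighbourhood of $x$, the displayed product identity together with monotonicity of $\limfunc{int}\,\limfunc{cl}$ yields the tube inclusion $\limfunc{int}\left( \limfunc{cl}\left( G_{x}\right) \right) \times Y\subseteq \bigcup_{j}\limfunc{int}\left( \limfunc{cl}\left( O_{\alpha \left( x,y_{j}\right) }\right) \right) $. Finally I apply near compactness of $X$ to the open cover $\left\{ G_{x}:x\in X\right\} $ to extract $x_{1},\ldots ,x_{p}$ with $X=\bigcup_{i}\limfunc{int}\left( \limfunc{cl}\left( G_{x_{i}}\right) \right) $, and combine the resulting finitely many tubes.

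The step I expect to need the most care is the passage from $G_{x}$ to $\limfunc{int}\left( \limfunc{cl}\left( G_{x}\right) \right) $: near compactness of $X$ returns the interiors of closures rather than the $G_{x_{i}}$ themselves, so I must check that the tube inclusion survives this enlargement. It does, because $G_{x}\subseteq A_{x,y_{j}}$ forces $\limfunc{int}\left( \limfunc{cl}\left( G_{x}\right) \right) \subseteq \limfunc{int}\left( \limfunc{cl}\left( A_{x,y_{j}}\right) \right) $; hence any $\left( x^{\prime },y^{\prime }\right) $ with $x^{\prime }\in \limfunc{int}\left( \limfunc{cl}\left( G_{x_{i}}\right) \right) $ and $y^{\prime }\in \limfunc{int}\left( \limfunc{cl}\left( B_{x_{i},y_{j}}\right) \right) $ still lies in $\limfunc{int}\left( \limfunc{cl}\left( A_{x_{i},y_{j}}\times B_{x_{i},y_{j}}\right) \right) \subseteq \limfunc{int}\left( \limfunc{cl}\left( O_{\alpha \left( x_{i},y_{j}\right) }\right) \right) $. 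Assembling the $p$ tubes then gives $X\times Y=\bigcup_{i,j}\limfunc{int}\left( \limfunc{cl}\left( O_{\alpha \left( x_{i},y_{j}\right) }\right) \right) $, a finite subfamily of $\mathcal{O}$ witnessing near compactness. As an alternative route one could work through the semiregularization, proving that $Z$ is nearly compact iff $Z_{s}$ is compact and that $\left( X\times Y\right) _{s}=X_{s}\times Y_{s}$, and then invoke the finite case of Tychonoff's theorem; but the direct tube argument above is shorter and entirely self-contained.
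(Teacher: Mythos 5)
Your argument is correct. Be aware, though, that the paper offers no proof of this lemma to compare against: it is imported verbatim from Singal and Mathur \cite{singal}. Your direct tube-lemma argument stands on its own. The product identity $\limfunc{int}\left( \limfunc{cl}\left( A\times B\right) \right) =\limfunc{int}\left( \limfunc{cl}\left( A\right) \right) \times \limfunc{int}\left( \limfunc{cl}\left( B\right) \right) $ is valid for finite products (both $\limfunc{cl}$ and $\limfunc{int}$ commute with finite products), and you correctly identify and repair the one place where a naive transcription of the compact tube lemma would fail: near compactness of $X$ hands you $\limfunc{int}\left( \limfunc{cl}\left( G_{x_{i}}\right) \right) $ rather than $G_{x_{i}}$ itself, and the inclusion $G_{x}\subseteq A_{x,y_{j}}$ propagates through $\limfunc{int}\circ \limfunc{cl}$ by monotonicity, so the enlarged tubes are still absorbed by the sets $\limfunc{int}\left( \limfunc{cl}\left( O_{\alpha \left( x_{i},y_{j}\right) }\right) \right) $, which form the required finite subfamily of the original cover. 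The classical route --- and essentially the one in \cite{singal} --- is the semiregularization argument you mention as an alternative ($Z$ nearly compact iff $Z_{s}$ compact, $\left( X\times Y\right) _{s}=X_{s}\times Y_{s}$, then the finite case of Tychonoff); that is shorter if the semiregularization machinery is already available, whereas yours is elementary and self-contained. It is also worth noting that your tube construction is essentially the same one the paper performs later in Lemma~\ref{lem 0.3} to refine a regular open cover of $X\times Y$ by a product cover, so the two arguments could share their scaffolding.
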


\begin{lemma}
\label{lem 0.3} Let $\left( X,f\right) $ and $\left( Y,h\right) $ be two
nearly compact $R$-dynamical systems. If $\mathcal{W}$ is a regular open
cover of $X\times Y$, then there exist regular open cover $\mathcal{U}$ of $%
X $ and regular open cover $\mathcal{V}$ of $Y$ satisfying $\mathcal{W}\prec
\mathcal{U}\times \mathcal{V}$.
\end{lemma}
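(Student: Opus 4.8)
The plan is to adapt the classical tube-lemma argument for products of compact spaces, replacing ordinary open covers by regular open covers and ordinary compactness by the finite-subcover property of near compactness. The essential preliminary observation is that, although regular open sets need not form a base for the topology of $X\times Y$, the regular open ``boxes'' $A\times B$ (with $A$ regular open in $X$ and $B$ regular open in $Y$) do form a neighbourhood base \emph{inside any regular open set}. Concretely, I would first prove the claim: if $W$ is regular open in $X\times Y$ and $(x,y)\in W$, then there are regular open sets $A\subseteq X$ and $B\subseteq Y$ with $(x,y)\in A\times B\subseteq W$. To see this, choose a basic open box $U\times V\subseteq W$ with $x\in U$ and $y\in V$, and set $A=\limfunc{int}(\limfunc{cl}(U))$, $B=\limfunc{int}(\limfunc{cl}(V))$. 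Using $\limfunc{cl}(U\times V)=\limfunc{cl}(U)\times \limfunc{cl}(V)$ and $\limfunc{int}(S\times T)=\limfunc{int}(S)\times \limfunc{int}(T)$, one gets $A\times B=\limfunc{int}(\limfunc{cl}(U\times V))\subseteq \limfunc{int}(\limfunc{cl}(W))=W$, the last equality because $W$ is regular open, while $(x,y)\in U\times V\subseteq A\times B$. Note this uses only the regular openness of $W$, not any separation axiom.

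With the claim in hand, I would run the two-stage reduction. For each $(x,y)$ fix $W_{x,y}\in\mathcal{W}$ containing it and, by the claim, regular open sets $A_{x,y}\ni x$ and $B_{x,y}\ni y$ with $A_{x,y}\times B_{x,y}\subseteq W_{x,y}$. Fixing $x$, the family $\{B_{x,y}:y\in Y\}$ is a regular open cover of $Y$, so near compactness of $Y$ yields a finite subcover $B_{x,y_1},\dots,B_{x,y_{m(x)}}$; put $A^{x}=\bigcap_{j}A_{x,y_j}$, which is regular open as a finite intersection of regular open sets and contains $x$. Then $\{A^{x}:x\in X\}$ is a regular open cover of $X$, and near compactness of $X$ gives a finite subcover $A^{x_1},\dots,A^{x_r}$. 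I take $\mathcal{U}=\{A^{x_1},\dots,A^{x_r}\}$ and let $\mathcal{V}$ be the common refinement of the finitely many finite covers $\{B_{x_i,y_j}:1\le j\le m(x_i)\}$, $i=1,\dots,r$; that is, $\mathcal{V}$ consists of all sets $\bigcap_{i=1}^{r}B_{x_i,y_{j_i}}$. Each member of $\mathcal{V}$ is regular open, and $\mathcal{V}$ covers $Y$ since each of the $r$ constituent families covers $Y$.

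Finally I would verify $\mathcal{W}\prec\mathcal{U}\times\mathcal{V}$. Any member of $\mathcal{U}\times\mathcal{V}$ has the form $A^{x_i}\times C$ with $C=\bigcap_{l}B_{x_l,y_{j_l}}\subseteq B_{x_i,y_{j_i}}$; since $A^{x_i}\subseteq A_{x_i,y_{j_i}}$ by construction, $A^{x_i}\times C\subseteq A_{x_i,y_{j_i}}\times B_{x_i,y_{j_i}}\subseteq W_{x_i,y_{j_i}}\in\mathcal{W}$, which is exactly the required refinement condition. I expect the main obstacle to be the preliminary claim: the naive attempt to shrink an open box to a regular open box inside $W$ fails in general, since regular open sets are not a base, and the resolution rests on the point that passing to $\limfunc{int}\limfunc{cl}$ of the box lands back inside $W$ precisely because $W=\limfunc{int}\limfunc{cl}(W)$. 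Everything after that is the standard tube-lemma bookkeeping, with ordinary compactness replaced by the finite-subcover property of near compactness, and with the stability of regular openness under finite intersections (and under products, as recorded at the start of this section) ensuring that all constructed covers are again regular open.
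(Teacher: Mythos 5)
Your proof is correct and follows essentially the same tube-lemma route as the paper: shrink each point of the product to a regular open box inside a member of $\mathcal{W}$, use near compactness of $Y$ to get a finite tube over each $x$, intersect the $X$-factors, then use near compactness of $X$. Two points of difference are worth recording. First, you explicitly prove the key preliminary claim that a regular open set $W$ in $X\times Y$ contains a regular open box around each of its points, via $A\times B=\limfunc{int}(\limfunc{cl}(U\times V))\subseteq \limfunc{int}(\limfunc{cl}(W))=W$; the paper simply posits such boxes $U(x,y)\times V(x,y)\subseteq A_{y}$ without justification, even though regular open sets do not form a base, so your argument fills a genuine gap in the published proof. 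Second, your final cover $\mathcal{V}$ is the join of the $r$ finite families $\{B_{x_{i},y_{j}}\}_{j}$ (all intersections $\bigcap_{i}B_{x_{i},y_{j_{i}}}$), whereas the paper forms the pointwise intersections $V(y)$ and then invokes near compactness of $Y$ a second time to extract a finite subcover; the two finishes are equivalent, and yours avoids that extra compactness step. The remaining ingredients you rely on (finite intersections and finite products of regular open sets are regular open, the join of finitely many covers is a cover, emptiness of some intersections being harmless for the refinement condition as defined) are all standard and correctly deployed.
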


\begin{proof}
Since $X$ and $Y$ are nearly compact, by Lemma \ref{lem 1} the product space
of $X\times Y$ is also nearly compact and for each $x\in X$, $\left\{
x\right\} \times Y$ is a nearly compact subset of $X\times Y$. Since $%
\mathcal{W}$ is a regular open cover of $X\times Y$, then there exist $%
A_{y}\in \mathcal{W}$ for any $y\in Y$ which satisfying $\left( x,y\right)
\in A_{y}$. Let $U\left( x,y\right) $ be a regular open neighborhood of $x$
in $X$ and $V\left( x,y\right) $ be a regular open neigborhood of $y$ in $Y$
with%
\begin{equation*}
U\left( x,y\right) \times V\left( x,y\right) \subseteq A_{y}.
\end{equation*}%
Since $Y$ is nearly compact, the regular open cover $\left\{ V\left(
x,y\right) :y\in Y\right\} $ of $Y$ has a finite subcover, say $\left\{
V\left( x,y_{1}\right) ,\ldots ,V\left( x,y_{m\left( x\right) }\right)
\right\} $. Hence, for each $U\left( x,y_{j}\right) \times V\left(
x,y_{j}\right) ,j\in \left\{ 1,2,\ldots ,m\left( x\right) \right\} $, there
exist an $A_{j}\in \mathcal{W}$ such that $U\left( x,y_{j}\right) \times
V\left( x,y_{j}\right) \subseteq A_{j}$. Denote $U\left( x\right)
=\dbigcap\limits_{j=1}^{m\left( x\right) }U\left( x,y_{j}\right) $, then%
\begin{equation*}
U\left( x\right) \times V\left( x,y_{j}\right) \subseteq A_{j}.
\end{equation*}%
On the other hand, as $X$ is nearly compact, the regular open cover $\left\{
U\left( x\right) :x\in X\right\} $ of $X$ has a finite subcover, say $%
\left\{ U\left( x_{1}\right) ,\ldots ,U\left( x_{n}\right) \right\} $.
Observe that each $U\left( x_{i}\right) \times V\left( x_{i},y_{j}\right)
,i\in \left\{ 1,2,\ldots ,n\right\} ,j\in \left\{ 1,2,\ldots ,m\left(
x_{i}\right) \right\} $ is contained in an element of $\mathcal{W}$. For any
$y\in Y$, let $V\left( y\right) $ be the intersection of those elements of
the collection $\left\{ V\left( x_{i},y_{j}\right) :i=1,\ldots ,n;j=1,\ldots
,m\left( x_{i}\right) \right\} $ that contain $y$. Since this collection has
finitely many elements and each of them is regular open subset of $Y$, then $%
V\left( y\right) $ is also regular open subset of $Y$. Notice that $\left\{
V\left( y\right) :y\in Y\right\} $ is a regular open cover of $Y$ and hence
it has a finite subcover as $Y$ is nearly compact. Denote this subcover by $%
\left\{ V\left( y_{1}\right) ,\ldots ,V\left( y_{m}\right) \right\} $. Now, $%
\left\{ U\left( x_{i}\right) \times V\left( y_{j}\right) :i=1,\ldots
,n;j=1,\ldots ,m\right\} $ is a regular open cover of $X\times Y$ and each $%
U\left( x_{i}\right) \times V\left( y_{j}\right) $ is contained in an
element of $\mathcal{W}$. Let $\mathcal{U}=\left\{ U\left( x_{i}\right)
:i=1,\ldots ,n\right\} $ and $\mathcal{V}=\left\{ V\left( y_{j}\right)
:j=1,\ldots ,m\right\} $. Then $\mathcal{U}\times \mathcal{V}$ is a regular
open cover of $X\times Y$ and $\mathcal{W}\prec \mathcal{U}\times \mathcal{V}
$.
\end{proof}

\begin{theorem}
\label{Th 4.2}Let $\left( X,f\right) $ and $\left( Y,h\right) $ be two
nearly compact $R$-dynamical systems. Then
\begin{equation*}
Ent_{n}\left( f\times h\right) \leq Ent_{n}\left( f\right) +Ent_{n}\left(
h\right) ,
\end{equation*}%
where $f\times h:X\times Y\rightarrow X\times Y$ is $R$-map defined by $%
\left( f\times h\right) \left( x,y\right) =\left( f\left( x\right) ,h\left(
y\right) \right) $.
\end{theorem}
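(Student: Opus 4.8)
The plan is to reduce an arbitrary regular open cover of the product to a genuine product cover and then invoke the subadditivity already established for product covers. Concretely, let $\mathcal{W}$ be any regular open cover of $X\times Y$. By Lemma \ref{lem 0.3} there exist regular open covers $\mathcal{U}$ of $X$ and $\mathcal{V}$ of $Y$ with $\mathcal{W}\prec \mathcal{U}\times\mathcal{V}$. My goal is to show
\[
Ent_n\left( f\times h,\mathcal{W}\right) \leq Ent_n\left( f,\mathcal{U}\right) + Ent_n\left( h,\mathcal{V}\right),
\]
after which taking the supremum over $\mathcal{W}$ on the left and bounding the right-hand side by $Ent_n(f)+Ent_n(h)$ completes the argument.

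The algebraic heart is a commutation identity. Since $(f\times h)^{-1}(U\times V)=f^{-1}(U)\times h^{-1}(V)$ for all $U\subseteq X$, $V\subseteq Y$, iterating gives $(f\times h)^{-i}(U\times V)=f^{-i}(U)\times h^{-i}(V)$. Because intersecting a family of rectangles yields the rectangle of the intersections, $\bigcap_i(A_i\times B_i)=\left(\bigcap_i A_i\right)\times\left(\bigcap_i B_i\right)$, each member $\bigcap_{i=0}^{m-1}(f\times h)^{-i}(U_i\times V_i)$ of the $m$-fold join factors as $\left(\bigcap_{i=0}^{m-1}f^{-i}(U_i)\right)\times\left(\bigcap_{i=0}^{m-1}h^{-i}(V_i)\right)$. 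Hence
\[
\bigvee_{i=0}^{m-1}(f\times h)^{-i}(\mathcal{U}\times\mathcal{V})=\left(\bigvee_{i=0}^{m-1}f^{-i}(\mathcal{U})\right)\times\left(\bigvee_{i=0}^{m-1}h^{-i}(\mathcal{V})\right).
\]
Both joins on the right are regular open covers, since finite intersections and $R$-map preimages of regular open sets remain regular open, so Lemma \ref{cover} applies and yields
\[
M_n\!\left(\bigvee_{i=0}^{m-1}(f\times h)^{-i}(\mathcal{U}\times\mathcal{V})\right)\leq M_n\!\left(\bigvee_{i=0}^{m-1}f^{-i}(\mathcal{U})\right)+M_n\!\left(\bigvee_{i=0}^{m-1}h^{-i}(\mathcal{V})\right).
\]
Dividing by $m$ and letting $m\to\infty$ (the limits exist by Theorem \ref{lim}) gives $Ent_n\left( f\times h,\mathcal{U}\times\mathcal{V}\right)\leq Ent_n\left( f,\mathcal{U}\right)+Ent_n\left( h,\mathcal{V}\right)$.

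It remains to pass from $\mathcal{U}\times\mathcal{V}$ back to $\mathcal{W}$ using the refinement. I would first record that refinement is preserved by preimages and joins: from $\mathcal{W}\prec\mathcal{U}\times\mathcal{V}$ one obtains $(f\times h)^{-i}(\mathcal{W})\prec(f\times h)^{-i}(\mathcal{U}\times\mathcal{V})$ for each $i$, and then $\bigvee_{i=0}^{m-1}(f\times h)^{-i}(\mathcal{W})\prec\bigvee_{i=0}^{m-1}(f\times h)^{-i}(\mathcal{U}\times\mathcal{V})$. Theorem \ref{thm_n2.1}(b) then supplies the matching $M_n$ inequality, which after dividing by $m$ and taking the limit gives $Ent_n\left( f\times h,\mathcal{W}\right)\leq Ent_n\left( f\times h,\mathcal{U}\times\mathcal{V}\right)$. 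Chaining the two inequalities and taking the supremum over all regular open covers $\mathcal{W}$ of $X\times Y$ then produces $Ent_n(f\times h)\leq Ent_n(f)+Ent_n(h)$.

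The steps requiring genuine care, and which I regard as the crux, are the two commutation and preservation facts rather than the estimates themselves: that the $m$-fold dynamical join of a product cover factors exactly as the product of the component joins, and that the refinement $\mathcal{W}\prec\mathcal{U}\times\mathcal{V}$ survives intact after applying $(f\times h)^{-i}$ and joining. Once these are checked, the inequality follows mechanically from Lemma \ref{cover} and Theorem \ref{thm_n2.1}(b), while the existence of the limits and the regular openness of the intermediate covers are guaranteed by Theorem \ref{lim} and the $R$-map hypothesis.
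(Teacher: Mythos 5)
Your proposal is correct and follows essentially the same route as the paper's own proof: reduce the arbitrary cover $\mathcal{W}$ to a product cover via Lemma \ref{lem 0.3}, apply the subadditivity of $M_{n}$ on product covers (Lemma \ref{cover}) together with refinement monotonicity, and pass to the supremum. The only difference is that you make explicit the two facts the paper leaves implicit --- the factorization $\bigvee_{i=0}^{m-1}\left( f\times h\right) ^{-i}\left( \mathcal{U}\times \mathcal{V}\right) =\left( \bigvee_{i=0}^{m-1}f^{-i}\left( \mathcal{U}\right) \right) \times \left( \bigvee_{i=0}^{m-1}h^{-i}\left( \mathcal{V}\right) \right) $ and the preservation of $\mathcal{W}\prec \mathcal{U}\times \mathcal{V}$ under preimages and joins --- which is a welcome addition rather than a deviation.
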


\begin{proof}
Let $\mathcal{C}$ be an arbitrary regular open cover of $X\times Y$. By
Lemma \ref{lem 0.3}, there exist regular open cover $\mathcal{U}$ of $X$ and
regular open cover $\mathcal{V}$ of $Y$ such that $\mathcal{U}%
=\{U_{i}:i=1,\ldots ,n\}$ where $U_{i}$ is a regular open subset of $X$ and $%
\mathcal{V}=\{V_{j}:j=1,\ldots ,m\}$ where $V_{j}$ is a regular open subset
of $Y$ satisfying\ $\mathcal{C}\prec \mathcal{U}\times \mathcal{V}$. By
Theorem \ref{Th2.1} and Lemma \ref{cover}
\begin{eqnarray*}
Ent_{n}\left( f\times h,\mathcal{C}\right) &\leq &Ent_{n}\left( f\times h,%
\mathcal{U}\times \mathcal{V}\right) \\
&\leq &Ent_{n}\left( f,\mathcal{U}\right) +Ent_{n}\left( h,\mathcal{V}%
\right) .
\end{eqnarray*}%
So,
\begin{eqnarray*}
Ent_{n}\left( f\times h\right) &=&\sup_{\mathcal{C}}\{Ent_{n}\left\{ f\times
h,\mathcal{C}\right\} \\
&\leq &\sup_{\mathcal{C}}\left\{ Ent_{n}\left( f,\mathcal{U}\right)
+Ent_{n}\left( h,\mathcal{V}\right) \right\} \\
&\leq &\sup_{\mathcal{U}}\left\{ Ent_{n}\left( f,\mathcal{U}\right) \right\}
+\sup_{\mathcal{V}}\left\{ Ent_{n}\left( h,\mathcal{V}\right) \right\} \\
&=&Ent_{n}\left( f\right) +Ent_{n}\left( h\right) .
\end{eqnarray*}
\end{proof}

\begin{theorem}
Let $\left( X,f\right) $ and $\left( Y,h\right) $ be two topological $R$%
-dynamical systems with $X\times Y$ are $R$-space. If $X$ and $Y$ are
Hausdorff, then $Ent_{N}\left( f\times h\right) \leq Ent_{N}\left( f\right)
+Ent_{N}\left( h\right) $, where $f\times h:X\times Y\rightarrow X\times Y$
is $R$-map defined by $\left( f\times h\right) \left( x,y\right) =\left(
f\left( x\right) ,h\left( y\right) \right) $.
\end{theorem}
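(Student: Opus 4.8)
The plan is to reduce the product inequality for $Ent_{N}$ to the already-established product inequality for $Ent_{n}$ (Theorem \ref{Th 4.2}) by transporting every quantity from the ``relative'' world to the ``whole nearly compact space'' world through the Hausdorff-plus-$R$-space bridge theorem that precedes this section (the one asserting $Ent_{N}(f,K)=Ent_{n}(f|_{K},K)$ and $Ent_{N}(f)=Ent_{n}(f|_{K})$). First I would dispose of the trivial case $H(X\times Y,f\times h)=\emptyset$, in which $Ent_{N}(f\times h)=0$ while $Ent_{N}(f),Ent_{N}(h)\ge 0$ by Theorem \ref{Th2.1}(a); the inequality is then immediate. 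Otherwise, since $Ent_{N}(f\times h)=\sup_{K}Ent_{N}(f\times h,K)$ over $K\in H(X\times Y,f\times h)$, it suffices to bound $Ent_{N}(f\times h,K)$ by $Ent_{N}(f)+Ent_{N}(h)$ for a fixed but arbitrary such $K$.

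Fix $K\in H(X\times Y,f\times h)$ and set $K_{1}=T_{x}(K)$, $K_{2}=T_{y}(K)$. By Lemma \ref{lem 0.1} we have $K_{1}\in H(X,f)$, $K_{2}\in H(Y,h)$ and $K\subseteq K_{1}\times K_{2}$. The next step is to verify that $K_{1}\times K_{2}$ itself lies in $H(X\times Y,f\times h)$: invariance is clear, since $(f\times h)(K_{1}\times K_{2})=f(K_{1})\times h(K_{2})\subseteq K_{1}\times K_{2}$, and I would show that the product of a set nearly compact relative to $X$ with a set nearly compact relative to $Y$ is nearly compact relative to $X\times Y$ by a Tube-lemma argument in the exact spirit of Lemma \ref{lem 0.3} (refine an open cover of $K_{1}\times K_{2}$ to boxes $U\times V$, use near compactness of $K_{2}$ relative to $Y$ on each slice to get finitely many $V$'s, intersect the corresponding $U$'s, then use near compactness of $K_{1}$ relative to $X$; the identity $\operatorname{int}(\operatorname{cl}(U\times V))=\operatorname{int}(\operatorname{cl}(U))\times\operatorname{int}(\operatorname{cl}(V))$ makes the resulting finite family of regular-open hulls cover $K_{1}\times K_{2}$). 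With $K\subseteq K_{1}\times K_{2}$ and both in $H(X\times Y,f\times h)$, monotonicity (Theorem \ref{Th 2.2}(b)) gives $Ent_{N}(f\times h,K)\le Ent_{N}(f\times h,K_{1}\times K_{2})$.

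Now I would cross over to $Ent_{n}$. Because $X$ and $Y$ are Hausdorff, $X\times Y$ is Hausdorff, and it is an $R$-space by hypothesis, so the bridge theorem applies to the system $(X\times Y,f\times h)$ with invariant set $K_{1}\times K_{2}$, giving $Ent_{N}(f\times h,K_{1}\times K_{2})=Ent_{n}((f\times h)|_{K_{1}\times K_{2}},K_{1}\times K_{2})$. Since the subspace $K_{1}\times K_{2}$ carries the product topology and $(f\times h)|_{K_{1}\times K_{2}}=f|_{K_{1}}\times h|_{K_{2}}$, this is the $Ent_{n}$ of the product of the nearly compact $R$-dynamical systems $(K_{1},f|_{K_{1}})$ and $(K_{2},h|_{K_{2}})$, so Theorem \ref{Th 4.2} yields $Ent_{n}(f|_{K_{1}}\times h|_{K_{2}})\le Ent_{n}(f|_{K_{1}})+Ent_{n}(h|_{K_{2}})$. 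To finish, I would note that $X$ and $Y$ are themselves $R$-spaces: for regular open $U\subseteq X$ the slab $U\times Y$ is regular open in $X\times Y$, and applying the $R$-space property of $X\times Y$ to unions of such slabs forces unions of regular open subsets of $X$ to be regular open (using $\operatorname{cl}(A\times Y)=\operatorname{cl}(A)\times Y$ and $\operatorname{int}(C\times Y)=\operatorname{int}(C)\times Y$). Hence the bridge theorem applies on each factor, and together with Definition \ref{def_N2} it gives $Ent_{n}(f|_{K_{1}})=Ent_{N}(f,K_{1})\le Ent_{N}(f)$ and likewise $Ent_{n}(h|_{K_{2}})\le Ent_{N}(h)$. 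Chaining the inequalities yields $Ent_{N}(f\times h,K)\le Ent_{N}(f)+Ent_{N}(h)$, and taking the supremum over $K$ completes the proof.

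The main obstacle I anticipate is the bookkeeping at the interface between the two entropy notions rather than any single hard estimate. One must ensure that $K_{1}\times K_{2}$ genuinely qualifies as an element of $H(X\times Y,f\times h)$ (the relative near compactness of the product, where the Tube-lemma work lives), and that the bridge theorem is legitimately invoked three times, once on the product and once on each factor, which in turn forces the side observation that $X\times Y$ being an $R$-space passes to $X$ and $Y$ and that $(K_{1},f|_{K_{1}})$, $(K_{2},h|_{K_{2}})$ are bona fide nearly compact $R$-dynamical systems. Everything else is the monotonicity and subadditivity already recorded in Theorem \ref{Th 2.2} and Theorem \ref{Th 4.2}.
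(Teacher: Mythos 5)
Your proposal is correct and follows essentially the same route as the paper: dispose of the case $H(X\times Y,f\times h)=\emptyset$, project $K$ to $K_{x}\times K_{y}$ via Lemma \ref{lem 0.1}, enlarge by monotonicity (Theorem \ref{Th 2.2}), cross over to $Ent_{n}$ on the product of the projections, apply the product subadditivity of $Ent_{n}$ (Theorem \ref{Th 4.2}), and return to $Ent_{N}$ on each factor. The differences are organizational rather than mathematical --- you invoke the packaged bridge theorem where the paper redoes the cover-level manipulations through Theorem \ref{n=N} and Lemma \ref{lem 0.3} --- and you are in fact more careful on two points the paper glosses over: that $K_{x}\times K_{y}$ is nearly compact \emph{relative to} $X\times Y$ (the paper cites Lemma \ref{lem 1}, which concerns nearly compact spaces rather than relative near compactness, whereas your tube-lemma argument addresses the relative notion directly), and that the factors $X$ and $Y$ inherit the $R$-space property from $X\times Y$ (the paper simply asserts that $X$ and $Y$ are $R$-spaces, which is not literally among the stated hypotheses).
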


\begin{proof}
If $H\left( X\times Y,f\times h\right) =\emptyset $, then the result follows
since $Ent_{N}\left( f\times h\right) =0$ by Definition \ref{def_N2}.

If $H\left( X\times Y,f\times h\right) \neq \emptyset $, let $K\in H\left(
X\times Y,f\times h\right) $ and the projections $T_{x}:X\times Y\rightarrow
X$, $T_{y}:X\times Y\rightarrow Y$. For any regular open cover $\mathcal{W}$
of $X\times Y$, $T_{x}\left( K\right) \in H\left( X,f\right) ,T_{y}\left(
K\right) \in H\left( X,f\right) $ and $K\subseteq T_{x}\left( K\right)
\times T_{y}\left( K\right) $ by Lemma \ref{lem 0.1}. Since $X$ and $Y$ are
Hausdorff and $R$-spaces, then by Lemmas \ref{fA} and \ref{lem haus3} $%
T_{x}\left( K\right) $ and $T_{y}\left( K\right) $ are regular closed subset
of $X$ and $Y$, respectively.

Denote $T_{x}\left( K\right) $ and $T_{y}\left( K\right) $ as $K_{x}$ and $%
K_{y}$, respectively. Thus $K\subseteq K_{x}\times K_{y}$\ and $K_{x}\times
K_{y}\in H\left( X\times Y,f\times h\right) $ by Lemma \ref{lem 1}.\ By
Theorem \ref{Th 2.2},
\begin{equation*}
Ent_{N}\left( f\times h,\mathcal{W},K\right) \leq Ent_{N}\left( f\times h,%
\mathcal{W},K_{x}\times K_{y}\right) .
\end{equation*}%
From Theorem \ref{n=N}, we have%
\begin{equation}
Ent_{N}\left( f\times h,\mathcal{W},K_{x}\times K_{y}\right) =Ent_{n}\left(
\left( f\times h\right) |_{K_{x}\times K_{y}},\mathcal{W}|_{K_{x}\times
K_{y}}\right)  \label{(1)}
\end{equation}%
where $\mathcal{W}|_{K_{x}\times K_{y}}$ is a regular open cover of $%
K_{x}\times K_{y}$. By Lemma \ref{lem 0.3} , there exist a regular open
cover of $\mathcal{U^{\prime }}$ of $K_{x}$ and a regular open cover $%
\mathcal{V^{\prime }}$ of $K_{y}$ such that $\mathcal{U^{\prime }}=\left\{
U_{i}:i=1,\ldots ,n\right\} $ where $U_{i}$ is a regular open subset of $%
K_{x}$ and $\mathcal{V^{\prime }}=\left\{ V_{j}:j=1,\ldots ,n\right\} $
where $V_{j}$ is a regular open subset of $K_{y}$ satisfying $\mathcal{W}%
|_{K_{x}\times K_{y}}\prec \mathcal{U^{\prime }}\times \mathcal{V^{\prime }}$%
. For every $U_{i}$, there exist a regular open subset $A_{i}$ of $X$ such
that $U_{i}=A_{i}\cap K_{x}$. Denote $\mathcal{U}=\left\{ A_{1},\ldots
,A_{n},X\setminus K_{x}\right\} $. Hence $\mathcal{U}$ is regular open cover
of $X$ and $\mathcal{U}|_{K_{x}}=\mathcal{U^{\prime }}\cup \left\{ \emptyset
\right\} $. Similarly, there exist a regular open cover $\mathcal{V}$ of $Y$
such that $\mathcal{V}|_{K_{y}}=\mathcal{V^{\prime }}\cup \left\{ \emptyset
\right\} $. Hence, from $\mathcal{W}|_{K_{x}\times K_{y}}\prec \mathcal{%
U^{\prime }}\times \mathcal{V^{\prime }}$, we have%
\begin{equation*}
\mathcal{W}|_{K_{x}\times K_{y}}\prec \left( \mathcal{U^{\prime }}\cup
\left\{ \emptyset \right\} \right) \times \left( \mathcal{V^{\prime }}\cup
\left\{ \emptyset \right\} \right) .
\end{equation*}%
Therefore, we have $\mathcal{W}|_{K_{x}\times K_{y}}\prec \mathcal{U}%
|_{K_{x}}\times \mathcal{V}|_{K_{y}}$. Moreover, we also have%
\begin{equation}
(\mathcal{U}\times \mathcal{V})|_{K_{x}\times K_{y}}=\mathcal{U}%
|_{K_{x}}\times \mathcal{V}|_{K_{y}}.  \label{eq1}
\end{equation}%
By Theorem \ref{thm_n2.1}, equation (\ref{eq1}) and Theorem \ref{Th 4.2}, we
have%
\begin{eqnarray*}
Ent_{n}\left( \left( f\times h\right) |_{K_{x}\times K_{y}},\mathcal{W}%
|_{K_{x}\times K_{y}}\right) &\leq &Ent_{n}\left( \left( f\times h\right)
|_{K_{x}\times K_{y}},\mathcal{U}|_{K_{x}}\times \mathcal{V}|_{K_{y}}\right)
\\
&=&Ent_{n}\left( \left( f\times h\right) |_{K_{x}\times K_{y}},(\mathcal{U}%
\times \mathcal{V})|_{K_{x}\times K_{y}}\right) \\
&=&Ent_{n}\left( f|_{K_{x}},\mathcal{U}|_{K_{x}}\right) +Ent_{n}\left(
h|_{K_{y}},\mathcal{V}|_{K_{y}}\right) .
\end{eqnarray*}%
By Theorem \ref{n=N},%
\begin{equation*}
Ent_{N}\left( f\times h,\mathcal{U}\times \mathcal{V},K_{x}\times
K_{y}\right) =Ent_{n}\left( \left( f\times h\right) |_{K_{x}\times
K_{y}},\left( \mathcal{U}\times \mathcal{V}\right) |_{K_{x}\times
K_{y}}\right) .
\end{equation*}%
Hence, $Ent_{N}\left( f,\mathcal{U},K_{x}\right) =Ent_{n}\left( f|_{K_{x}},%
\mathcal{U}|_{K_{x}}\right) $ and $Ent_{N}\left( h,\mathcal{V},K_{y}\right)
=Ent_{n}\left( h|_{K_{y}},\mathcal{V}|_{K_{y}}\right) $. Thus from equation $%
\left( \ref{(1)}\right) $,%
\begin{eqnarray*}
Ent_{N}\left( f\times h,\mathcal{U}\times \mathcal{V},K_{x}\times
K_{y}\right) &\leq &Ent_{n}\left( f|_{K_{x}},\mathcal{U}|_{K_{x}}\right)
+Ent_{n}\left( h|_{K_{y}},\mathcal{V}|_{K_{y}}\right) \\
&=&Ent_{N}\left( f,\mathcal{U},K_{x}\right) +Ent_{N}\left( h,\mathcal{V}%
,K_{y}\right) .
\end{eqnarray*}%
By Theorem \ref{Th 2.2} and equation $\left( \ref{(1)}\right) $,%
\begin{eqnarray*}
Ent_{N}\left( f\times h,\mathcal{W},K\right) &\leq &Ent_{N}\left( f\times h,%
\mathcal{W},K_{x}\times K_{y}\right) \\
&=&Ent_{n}\left( \left( f\times h\right) |_{K_{x}\times K_{y}},\mathcal{W}%
|_{K_{x}\times K_{y}},K_{x}\times K_{y}\right) \\
&\leq &Ent_{n}\left( f|_{K_{x}},\mathcal{U}|_{K_{x}},K_{x}\right)
+Ent_{n}\left( h|_{K_{y}},\mathcal{V}|_{K_{y}},K_{y}\right) \\
&=&Ent_{N}\left( f,\mathcal{U},K_{x}\right) +Ent_{N}\left( h,\mathcal{V}%
,K_{y}\right) \\
&\leq &Ent_{N}\left( f\right) +Ent_{N}\left( h\right) .
\end{eqnarray*}%
Finally by Definitions \ref{def_N1} and \ref{def_N2}, we have%
\begin{eqnarray*}
Ent_{N}\left( f\times h\right) &=&\sup_{K}\left\{ \sup_{\mathcal{W}}\left\{
Ent_{N}\left( f\times h,\mathcal{W},K\right) \right\} \right\} \\
&\leq &Ent_{N}\left( f\right) +Ent_{N}\left( h\right) .
\end{eqnarray*}
\end{proof}

In the next example, we shows $Ent_{N}\left( f\right) =0$ for $f\left(
x\right) =kx$ on $%
\mathbb{R}
$ with usual topology.

\begin{example}
Let a map $f:\mathbb{R}\rightarrow \mathbb{R}$ define by $f\left( x\right)
=kx$, where $k\in \left\{ 2,3,4,\ldots \right\} $. The only invariant
compact subset of $%
\mathbb{R}
$ and hence nearly compact subset of $%
\mathbb{R}
$ is $\left\{ 0\right\} $, so $H\left( \mathbb{R},f\right) =\left\{ \left\{
0\right\} \right\} $. Denote $K=\left\{ 0\right\} $, thus for any regular
open cover $\mathcal{U}$ of $\mathbb{R}$, any subcover $\mathcal{V}$ of $%
\mathcal{U}$ such that $K\subseteq \dbigcup\limits_{V\in \mathcal{V}}V$ with
the smallest cardinality contains a single element of $\mathcal{U}$, i.e., $%
\left\{ 0\right\} $. Hence, $N_{K}\left( \mathcal{U}\right) =1$, so that $%
N_{K}\left( \bigvee_{i=0}^{n-1}f^{-i}\left( \mathcal{U}\right) \right) =1$
and $M_{K}\left( \bigvee_{i=0}^{n-1}f^{-i}\left( \mathcal{U}\right) \right)
=0$ which implies%
\begin{equation*}
Ent_{N}\left( f,\mathcal{U},K\right) =\lim_{n\rightarrow \infty }\frac{1}{n}%
M_{K}\left( \bigvee_{i=0}^{n-1}f^{-i}\left( \mathcal{U}\right) \right) =0.
\end{equation*}%
Therefore, from Definition \ref{def_N2}, $Ent_{N}\left( f\right) =0$. If we
replaced $\mathbb{R}$ by $\mathbb{R}^{+}$, then $H\left( \mathbb{R}%
^{+},f\right) =\emptyset $. Again from Definition \ref{def_N2}, $%
Ent_{N}\left( f\right) =0$. Recall that the role of topological entropy is
to describe the complexity of a topological dynamical system and positivity
of entropy implies the complexity of dynamical behaviour of the system.
Thus, we could say that topological nearly entropy is describe the
complexity of topological $R$-dynamical system and it positive value shows
that more complicated $R$-dynamical properties of the system. Hence, we
conclude that $Ent_{N}\left( f\right) =0$ is a characterization for $R$%
-dynamical property of the map $f\left( x\right) =kx$.
\end{example}

\section{Conclusion}

We present a notion of topological nearly entropy on nearly compact spaces
and investigate some its properties which is an extension from our previous
paper \cite{gulam}. Both of the notions are coincide in the special case of
nearly compact spaces. Yet, they retain various fundamental properties of
Adler et al. \cite{adler}. A new space, namely, $R$-space was introduced for
nearly compact on Hausdorff space. Consequently, the properties of
topological nearly entropy for product spaces are obtained.

\end{document}